\newcommand{\Env}[2][]{%
\ifthenelse{ \equal{#1}{} }  
{\ensuremath{#2_{\mathsf{c}}}}  
{\ensuremath{#2_{\mathsf{c},#1}}}
}
\newcommand{\LipOp}{\operatorname{\rm{Lip}}}
\newcommand{\Id}{\operatorname{\rm{Id}}}
\newcommand{\Lip}{\ensuremath{\mathrm{Lip}}}
 \newcommand{\Rea}{\mathbb{R}}
 \newcommand{\Nat}{\mathbb{N}}
\newcommand{\Compl}{\mathbb{C}}
\newcommand{\C}{\ensuremath{\mathcal{C}}}
\newcommand{\PP}{\ensuremath{\mathcal{P}}}
\newcommand{\MM}{\ensuremath{\mathcal{M}}}
\newcommand{\A}{\ensuremath{\mathcal{A}}}
\newcommand{\F}{\ensuremath{\mathcal{F}}}
\newcommand{\T}{\ensuremath{\mathcal{T}}}
\newcommand{\cospn}{\operatorname{\overline{\mathrm{span}}}\nolimits}
\newcommand{\lip}{\ensuremath{\mathrm{lip}}}
\newcommand{\complemented}{\stackrel{C}{\hookrightarrow}}
\newtheorem{Theorem}{Theorem}[section]
\newtheorem{thmx}{Theorem}
\newtheorem{Lemma}[Theorem]{Lemma}
\newtheorem{Corollary}[Theorem]{Corollary}
\theoremstyle{remark}
\newtheorem{Remark}[Theorem]{Remark}
\newtheorem{Definition}[Theorem]{Definition}
\newtheorem{Example}[Theorem]{Example}
\newtheorem{Question}[Theorem]{Question}
\newtheorem{Suggestion}[Theorem]{Suggestion}
\begin{document}
\title[Projections in Lipschitz-free spaces]{Projections in Lipschitz-free spaces induced by group actions}

\author[M. C\' uth]{Marek C\'uth}
\author[M. Doucha]{Michal Doucha}
\email{cuth@karlin.mff.cuni.cz}
\email{doucha@math.cas.cz}

\address[M.~C\' uth]{Charles University, Faculty of Mathematics and Physics, Department of Mathematical Analysis, Sokolovsk\'a 83, 186 75 Prague 8, Czech Republic}
\address[M.~C\'uth, M.~Doucha]{Institute of Mathematics of the Czech Academy of Sciences, \v{Z}itn\'a 25, 115 67 Prague 1, Czech Republic}

\subjclass[2020] {primary: 46B04, 46B20; secondary: 43A07}

\keywords{space of Lipschitz functions, Lipschitz-free space, group action by isometries, amenable group}
\thanks{M.~C\'uth has been supported by Charles University Research program No. UNCE/SCI/023, by the GA\v{C}R project 19-05271Y and by RVO: 67985840. M. Doucha was supported by the GA\v{C}R project 19-05271Y and RVO: 67985840.
}

\begin{abstract}
We show that given a compact group $G$ acting continuously on a metric space $\MM$ by bi-Lipschitz bijections with uniformly bounded norms, the Lipschitz-free space over the space of orbits $\MM/G$ (endowed with Hausdorff distance) is complemented in the Lipschitz-free space over $\MM$. We also investigate the more general case when $G$ is amenable, locally compact or SIN and its action has bounded orbits. Then we get that the space of Lipschitz functions $\Lip_0(\MM/G)$ is complemented in $\Lip_0(\MM)$. Moreover, if the Lipschitz-free space over $\MM$, $\F(\MM)$, is complemented in its bidual, several sufficient conditions on when $\F(\MM/G)$ is complemented in $\F(\MM)$ are given. Some applications are discussed. The paper contains preliminaries on projections induced by actions of amenable groups on general Banach spaces.
\end{abstract}
\maketitle

\section{Introduction}
Lipschitz-free Banach spaces are free objects in the category of Banach spaces over (pointed) metric spaces and also canonical preduals of spaces of scalar-valued Lipschitz functions on metric spaces. They currently form one of the most studied classes of spaces in Banach space theory. As their investigation is proceeding further, it is becoming apparent that the linear geometric structure of Lipschitz-free spaces is highly sensitive to geometric properties of metric spaces over which they are defined. Indeed, more and more results show the subtle relations between various standard properties studied in metric geometry and the structure of the corresponding Lipschitz-free spaces, see e.g. \cites{AlNew, APP21, GPZ18, PZ18}. In this note, we contribute to this research programme by studying Lipschitz-free spaces over spaces of orbits of compact (or more generally amenable) group actions by isometries (or more generally by bi-Lipschitz bijections with uniform bound on Lipschitz norms) equipped with the Hausdorff distance. Spaces of this form have a prominent role in metric geometry as many homogeneous metric space can be considered in this way. The following summarizes some of our main results.
\begin{thmx}\label{thm:intro}
Let $(\MM,d,0)$ be a pointed metric space and let $G$ be a group acting by isometries on $\MM$ such that some (or equivalently every) orbit is bounded. Denote by $\MM/G$ the space of closures of the orbits $\{\overline{Gx}\colon x\in\MM\}$ endowed with the Hausdorff metric.
\begin{itemize}
    \item If $G$ is compact, then $\F(\MM/G)\complemented \F(\MM)$.
    \item If $G$ is commutative or locally compact and amenable, then \[\Lip_0(\MM/G)\complemented \Lip_0(\MM).\] If $\MM$ is moreover proper and both $\MM$ and $\MM/G$ are purely $1$-unrectifiable, then $\F(\MM/G)\complemented \F(\MM)$.
\end{itemize}
\end{thmx}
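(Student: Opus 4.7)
The plan is to produce the required projections by averaging with respect to the group action; what differs between the three assertions is where the averaging takes place (in $\F(\MM)$ when $G$ is compact, in the dual $\Lip_0(\MM)$ when $G$ is merely amenable, and in the bidual for the pre-dualization in the last part). The common geometric input is that under an isometric action $d_H(\overline{Gx},\overline{Gy})=\inf_{y'\in\overline{Gy}}d(x,y')$, so that the orbit map $q\colon\MM\to\MM/G$, $x\mapsto\overline{Gx}$, is $1$-Lipschitz; consequently $q$ extends to a bounded quotient $\hat q\colon\F(\MM)\to\F(\MM/G)$, while the pullback $q^*\colon f\mapsto f\circ q$ isometrically embeds $\Lip_0(\MM/G)$ into $\Lip_0(\MM)$ with image exactly the $G$-invariant Lipschitz functions $\Lip_0(\MM)^G$.

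In the compact case I would average on $\F(\MM)$ with the normalized Haar measure $\mu_G$. Each $g$ induces an isometric automorphism $T_g$ of $\F(\MM)$ extending $x\mapsto gx$, and the representation $g\mapsto T_g$ is strongly continuous (verify on molecules, then use $\sup_g\|T_g\|=1$ and density), so the Bochner integral $P\xi:=\int_G T_g\xi\,d\mu_G(g)$ defines a norm-one projection onto the fixed subspace $\F(\MM)^G$. Setting $J(\delta_{\overline{Gx}}):=P(\delta_x)$, Haar-invariance gives well-definedness on orbits, and the estimate $\|P\delta_x-P\delta_y\|\le\int d(gx,ghy)\,d\mu_G(g)=d(x,hy)$, optimized over $h\in G$, yields $\Lip(J)\le 1$ with respect to the Hausdorff metric. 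A direct molecule-level check gives $\hat q\circ J=\Id_{\F(\MM/G)}$ and $J\circ \hat q=P$, so $J$ is an isomorphic embedding whose range coincides with that of $P$, and the compact case is complete.

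In the amenable case, where no Haar probability measure is available, I would replace the Bochner integral by the invariant-mean construction developed in the paper's preliminaries. The action $(g\cdot f)(x):=f(g^{-1}x)$ on $\Lip_0(\MM)$ is isometric, and amenability of $G$ produces a norm-one projection $\tilde P\colon\Lip_0(\MM)\to\Lip_0(\MM)^G$; combined with the isometric isomorphism $q^*\colon\Lip_0(\MM/G)\to\Lip_0(\MM)^G$ this yields the complementation of $\Lip_0(\MM/G)$ in $\Lip_0(\MM)$.

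For the last statement the plan is to pre-dualize $\tilde P$. Write $\tilde P=q^*\circ Q$ with $Q\colon\Lip_0(\MM)\to\Lip_0(\MM/G)$ a retraction of $q^*$; if $Q$ can be realised as $\sigma^*$ for some $\sigma\colon\F(\MM/G)\to\F(\MM)$, then duality gives $\hat q\circ\sigma=\Id$, so $\sigma\circ\hat q$ is the sought projection on $\F(\MM)$. The invariant-mean construction does not, however, guarantee weak*-continuity of $Q$, so I would instead carry out the averaging at the level of the bidual $\F(\MM)^{**}$ and then descend back to $\F(\MM)$ using the hypothesis that $\F(\MM)$ is complemented in its bidual; under properness and pure $1$-unrectifiability of both $\MM$ and $\MM/G$, this complementation is supplied by the now-standard bidual-complementation theorem for purely $1$-unrectifiable proper metric spaces. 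I expect this last pre-dualization step to be the main obstacle; the identifications $\F(\MM)^G\cong\F(\MM/G)$ and $\Lip_0(\MM)^G\cong\Lip_0(\MM/G)$, by contrast, are essentially formal once the Hausdorff-distance formula for isometric orbits is in hand.
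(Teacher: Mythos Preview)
Your plan coincides with the paper's: Haar-measure Bochner averaging on $\F(\MM)$ for the compact case, invariant-mean averaging on $\Lip_0(\MM)$ for the amenable $\Lip_0$ statement, and the composite $P_G=Q\circ R_G$ through the bidual for the last assertion. One cosmetic point you should fix throughout: unless $G$ fixes the base point, the map $\delta(x)\mapsto\delta(gx)$ on $\F(\MM)$ (and likewise $f\mapsto f\circ g^{-1}$ on $\Lip_0(\MM)$) is only \emph{affine}; the paper separates the affine action $A_g$ from its linear part $T_g\mu=A_g\mu-\delta(g0)$, resp.\ $S_gf=f\circ g^{-1}-f(g^{-1}0)$, and runs all the averaging with the linear part.

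The substantive gap is in the third part, where you correctly locate the difficulty but do not isolate the two ingredients that resolve it. First, arbitrary complementation of $\F(\MM)$ in its bidual is not enough: for $Q\circ R_G$ to project \emph{onto} the fixed space $I_G$ (rather than merely into $X$) you need $\ker Q$ to be invariant under the linear $G$-action, so that the decomposition $R_G x=Q R_G x+(I-Q)R_G x$ is respected by each $T_g$. The paper gets this from the fact that under the hypothesis $\F(\MM)$ is $L$-embedded (equivalently, is the dual of $\lip_0(\MM)$), and a short argument (Lemma~\ref{lem:lembed}, resp.\ Lemma~\ref{lem:dualSpacesWorse}) shows this forces $\ker Q$ to be $G$-invariant. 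Second, you misattribute the role of pure $1$-unrectifiability of $\MM/G$: it is \emph{not} used for bidual complementation of $\F(\MM)$ (only the hypothesis on $\MM$ matters there). Its role is to guarantee that $\lip_0(\MM/G)$ separates the points of $\F(\MM/G)$, and this separation is precisely what allows one to show that the range $I_G=P_G[\F(\MM)]$ is isometric to $\F(\MM/G)$ rather than a possibly proper complemented subspace of it (Theorem~\ref{thm:complInBidualVariant} and Corollary~\ref{cor:amenableProjectionDualSpace}). Without this second hypothesis you would only know that $I_G$ sits complemented inside a copy of $\F(\MM/G)$ in the bidual.
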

One of the main application has already appeared in a separate paper in \cite{DP20} where it was used to prove the metric approximation property in free spaces over all compact metric groups, however other suggestions for applications are presented at the end of the paper. Nevertheless, the results seem to be of independent interest and after completing the first version of the paper, we found interesting that some of the questions have been also considered for Wasserstein spaces of measures by Lott and Villani in \cite{LoVi}*{Subsection 5.5} (see also \cite{GGKMS} for further generalizations).\medskip

Finally, let us say that the paper also has a secondary goal. That is, to put our new results about Lipschitz-free spaces into the context of more general actions of amenable groups on Banach spaces and the associated projections. We believe that these more general results are essentially known (and likely studied in the more general framework of amenable Banach algebras). However, since we lack any suitable reference we prove these results here which, we are sure, will be of benefit for most of the readers, who we do not want to expect to be familiar with amenable group actions.\medskip

Besides this introduction, the next section consists of preliminaries on Lipschitz-free spaces, amenable groups and basic notation concerning group actions on Banach spaces. In Section~\ref{sec:banach} we concentrate the aforementioned review of amenable group actions on Banach spaces. Then we present our main new results and the article is finished by suggestions for further applications and with some open problems.

Let us mention that we consider real Banach spaces, but many of the results have straightforward generalization to the complex case. Also, given a Banach space $X$, we denote by $\kappa_X:X\to X^{**}$ the canonical isometric embedding and for $A\subset X$ we put $A^\perp = \bigcap_{a\in A}\{x^*\in X^*\colon x^*(a)=0\}$, for $A\subset X^{*}$ we put $A_\perp = \bigcap_{a\in A}\{x\in X\colon a(x)=0\}$. If $X$ and $Y$ are Banach spaces, the symbol $Y\complemented X$ means that $Y$ is linearly isomorphic to a complemented subspace of $X$.

\section{Preliminaries}

The aim of this section is to recall the relevant information concerning Lipschitz-free spaces and amenable groups and settle a basic notation concerning groups actions on Banach spaces.
\subsection{Lipschitz-free spaces}

Lipschitz-free spaces, also known as Arens-Eells spaces, are free objects in the category of Banach spaces over the category of (pointed) metric spaces and they are characterized by the following universal property.

\begin{Theorem}\label{thm:universalProperty}Let $(\MM,\rho)$ be a pointed metric  space. Then $\F(\MM)$ is the unique Banach space for which there exists a linear isometry $\delta:\MM\to \F(\MM)$ onto a linearly dense set such that for every Banach space $X$ and every Lipschitz map $f\colon\MM\rightarrow X$ with $f(0) = 0$ there
exists a unique  linear map $T_{f}\colon\F_{p}(\MM)\rightarrow X$ with $T_{f}  \circ \delta_{\MM} = f$. Moreover $\Vert T_{f} \Vert =\LipOp(f)$. Pictorially,
\[
\xymatrix{\MM \ar[rr]^f \ar[dr]_{\delta_{\MM}} & & X\\
& \F(\MM) \ar[ur]_{T_{f}} &}
\]
\end{Theorem}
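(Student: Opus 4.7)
The plan is to realize $\F(\MM)$ concretely as a closed subspace of $\Lip_0(\MM)^*$, where $\Lip_0(\MM)$ is the Banach space of real-valued Lipschitz functions on $\MM$ vanishing at $0$, normed by the Lipschitz constant $\LipOp(\cdot)$. For each $x\in\MM$ let $\delta(x)$ be the evaluation functional $f\mapsto f(x)$, and define
\[
\F(\MM):=\overline{\spn}\{\delta(x):x\in\MM\}\subset\Lip_0(\MM)^*,
\]
equipped with the dual norm; the linear density of $\delta(\MM)$ is then built into the construction. That $\delta$ is an isometry is the first thing to verify: the inequality $\|\delta(x)-\delta(y)\|\le\rho(x,y)$ is immediate from the definition of the Lipschitz norm, while the reverse direction follows by exhibiting a norming function, for instance $f(z):=\rho(z,y)-\rho(0,y)$, which lies in the unit ball of $\Lip_0(\MM)$ and satisfies $(\delta(x)-\delta(y))(f)=\rho(x,y)$.

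For the universal property, given a Banach space $X$ and a Lipschitz map $f:\MM\to X$ with $f(0)=0$, I would tentatively define $T_f$ on the algebraic span of $\delta(\MM)$ by $T_f\bigl(\sum_i a_i\delta(x_i)\bigr):=\sum_i a_i f(x_i)$. Both well-definedness (independence of the chosen representation) and the bound $\|T_f\|\le\LipOp(f)$ are obtained in one stroke by duality: for any $\phi\in X^*$ with $\|\phi\|\le 1$, the composition $\phi\circ f$ lies in $\Lip_0(\MM)$ with $\LipOp(\phi\circ f)\le\LipOp(f)$, and
\[
\phi\Bigl(\sum_i a_i f(x_i)\Bigr)=\Bigl\langle\sum_i a_i\delta(x_i),\,\phi\circ f\Bigr\rangle.
\]
Taking the supremum over such $\phi$ via Hahn--Banach gives $\|\sum_i a_i f(x_i)\|_X\le\LipOp(f)\,\|\sum_i a_i\delta(x_i)\|$, which depends only on the element of $\F(\MM)$ and not on its representation. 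Hence $T_f$ extends by continuity to $T_f:\F(\MM)\to X$ with $\|T_f\|\le\LipOp(f)$. The reverse estimate is automatic: since $f=T_f\circ\delta$ and $\delta$ is an isometry, $\LipOp(f)\le\|T_f\|$.

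Uniqueness of $T_f$ for a given $f$ is forced by the density of $\delta(\MM)$, and uniqueness of the pair $(\F(\MM),\delta)$ up to isometric isomorphism follows from the standard categorical argument of applying the universal property to two candidates in both directions and noting that the composed maps agree with the identity on the dense subset $\delta(\MM)$. The only step that is not purely formal is the construction of the norming Lipschitz function witnessing that $\delta$ is an isometry; once that is in place, the entire argument for $T_f$ is an exercise in Hahn--Banach duality between $\Lip_0(\MM)$ and its predual.
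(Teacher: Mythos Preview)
Your argument is correct and follows precisely the construction the paper itself outlines: the paper does not give a self-contained proof of this theorem but defines $\F(\MM)$ as the closed linear span of the evaluation functionals in $\Lip_0(\MM)^*$, remarks that ``it is not so difficult to verify'' the universal property, and refers to \cite{CDW2016}*{Section 2} for details. Your Hahn--Banach duality step for well-definedness and the norm bound, the norming function $z\mapsto\rho(z,y)-\rho(0,y)$ for the isometric embedding, and the categorical uniqueness argument are exactly the standard ingredients one finds in that reference, so there is nothing to add.
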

It follows (considering the case when $X=\Rea$ in Theorem ~\ref{thm:universalProperty}) that $\F(\MM)$ is canonical preduals of the Banach space of scalar-valued Lipschitz functions vanishing at a distinguished point, and this point of view also allows the fastest construction of the space $\F(\MM)$. That is, let $\MM$ be a metric space with a distinguished point $0$ (different choices will produce linearly isometric spaces though) and let $\Lip_0(\MM)$ denote the Banach space of all scalar-valued Lipschitz function vanishing at $0$, with the norm being the minimal Lipschitz constant. The \emph{Lipschitz-free space over $\MM$}, denoted by $\F(\MM)$, can be defined as the closed linear span of $\{\delta(m)\colon m\in\MM\}\subseteq \Lip_0(\MM)^*$ in the dual of $\Lip_0(\MM)$, where $\delta(m)$ is the evaluation functional, that is, $\delta(m)(f) = f(m)$ for every $f\in\Lip_0(\MM)$. It is not so difficult to verify that this construction gives a Banach space $\F(\MM)$ which satisfies Theorem~\ref{thm:universalProperty}, see e.g. \cite{CDW2016}*{Section 2} for a proof, some more references and basic facts. Let us emphasize that the $w^*$ topology on bounded subsets of $\Lip_0(\MM) \equiv (\F(\MM))^*$ then coincides with pointwise convergence (which easily follows using the fact that $\delta(\MM)$ is linearly dense in $\F(\MM)$).

There is also an abstract construction of $\F(\MM)$ which avoids using the space $\Lip_0(\MM)$ and which defines the norm on $\F(\MM)$ as a variant of the Kantorovich-Rubinstein distance and connects the theory with optimal transport. We refer to \cite{Weaver1999} for a proper introduction to the subject.

\subsection{Amenable groups and related notions}
Let $G$ be a topological Hausdorff group. Then $G$ acts on $\ell_\infty(G)$ by left-shift and by right-shift, that is, for any $f\in \ell_\infty(G)$ and $g,h\in G$ we have $g\cdot f(h)=f(g^{-1}h)$ and $f\cdot g(h)=f(hg^{-1})$, respectively.

 By $\C^b_{lu}(G)$ and $\C^b_{ru}(G)$ we denote the closed subspaces of $\ell_\infty(G)$ defined as
\[\begin{split}\C^b_{lu}(G):=\{f\in\ell_\infty(G)\colon & x\mapsto x^{-1}\cdot f\in \ell_\infty(G) \text{ is continuous map }\\ & \text{ from $G$ to $(\ell_\infty(G),\|\cdot\|_\infty)$}\},
\end{split}\]
\[\begin{split}\C^b_{ru}(G):=\{f\in\ell_\infty(G)\colon & x\mapsto f\cdot x^{-1}\in \ell_\infty(G) \text{ is continuous map }\\ & \text{ from $G$ to $(\ell_\infty(G),\|\cdot\|_\infty)$}\},
\end{split}\]
 Let us note that $\C^b_{lu}(G)$ and $\C^b_{ru}(G)$ are exactly the spaces of all bounded left-uniformly continuous and right-uniformly continuous functions from $G$ to the scalar field, respectively. We denote by $\C^b_{u}(G)$ the set $\C^b_{lu}(G)\cap \C^b_{ru}(G)$.

We say that a linear functional $M:\C^b_{lu}(G)\to \Compl$ is \emph{left-invariant mean on $G$} if the following conditions are satisfied:
\begin{enumerate}
    \item\label{it:one} $M(1_G) = 1$;
    \item\label{it:invariance} $M$ is left-invariant, that is, $M(f)=M(g\cdot f)$ for all $f\in \C^b_{lu}(G)$ and $g\in G$;
    \item\label{it:positive} $M(f)\geq 0$ for all $f\in \C^b_{lu}(G)$ with $f\geq 0$.
\end{enumerate}
Similarly, $M:\C^b_{ru}(G)\to \Compl$ is \emph{right-invariant mean on $G$} if we replace left-invariance of $M$ by right-invariance of $M$.
Recall that for any left-invariant of right-invariant mean $M$ we have $\|M\|\leq 1$ (see e.g. \cite{AmenableBook}*{p. 422}).

\begin{Definition}
$G$ is said to be \emph{amenable} if there exists a left-invariant mean $M:\C^b_{lu}(G)\to \Compl$. 
\end{Definition}

\begin{Remark}
Note that the existence of left-invariant and right-invariant means are equivalent since once $M$ is left-invariant mean, then $\widetilde{M}$ defined as $\widetilde{M}(f):=M(g\mapsto f(g^{-1}))$ is right-invariant mean and vice versa.

Once we have a left-invariant mean $M_l$ on $G$, it is easy to see that the mapping $M:\C^b_{u}(G)\to\Compl$ defined as $M(f):=M_l(g\mapsto M_l(fg))$, $f\in \C^b_{u}(G)$ is a bi-invariant mean, that is, $M(1)=1$, $M(f)\geq 0$ for $f\in \C^b_{u}(G)$ with $f\geq 0$ and $M(f\cdot g)=M(g\cdot f) = M(f)$ for $f\in \C^b_{u}(G)$ and $g\in G$.
\end{Remark}

In the special case when $G$ is locally compact, it is more convenient to consider invariant means on larger function algebras than $C^b_{lu}(G)$. Recall that every locally compact group $G$ comes equipped with a left-invariant Haar measure $\mu$ which is unique up to multiplication by a positive scalar. Moreover, if $G$ is compact, then $\mu$ is finite, so without loss of generality it is probabilistic, and moreover it is bi-invariant, i.e. both left and right invariant. We can then consider $L_\infty(G,\mu)\supseteq C^b_{lu}(G)$, further denoted just by $L_\infty(G)$, on which $G$ acts like on $C^b_{lu}(G)$ by the shift. Amenable locally compact groups admit a left-invariant mean (or more generally, a bi-invariant mean) on $L_\infty(G)$ with the same properties as on $C^b_{lu}(G)$, we refer to \cite{AmenableBook}*{Appendix G} for details. If $G$ is compact, then the bi-invariant mean can be given by the formula 
\[M(f):=\int_G f(g) d\mu(g),\quad f\in L_\infty(G),\]
where $\mu$ is the bi-invariant probabilistic Haar measure on $G$, and it is moreover $w^*$-continuous.

\noindent\textbf{Convention.} Therefore, when $G$ is locally compact and amenable, the bi-invariant mean associated to $G$ is always assumed to be defined on $L_\infty(G)$.

The basic examples of amenable groups are abelian and compact groups, we again refer to \cite{AmenableBook}*{Appendix G} for more.\medskip

We will also need the notion of a SIN group. Well-known examples are e.g. compact groups, discrete groups or commutative groups, see e.g. \cite{SINgroupBook} for more details.

\begin{Definition}
Let $G$ be a group with the identity element $e$. We say $G$ is SIN (\emph{small invariant neighborhoods}) if for every neighborhood $U$ of $e$ there exists a neighborhood $V$ of $e$ such that $V = gVg^{-1}\subset U$ for every $g\in G$.
\end{Definition}

\subsection{Actions of groups on Banach spaces}\label{subsec:notationActions}

Suppose that a group $G$ acts by isometries on a Banach space $X$. If $X$ is real, which is always going to be our case, such isometries are automatically affine by the Mazur-Ulam theorem. We shall however always write \emph{affine isometries} when we want to make clear that they are not necessarily linear. Notice also that when $G$ acts by affine isometries, then for any $g\in G$ the map $x\mapsto gx-g0$ is a linear surjective isometry. 
Given an affine action $\A:G\times X\to X$ by isometries we denote by $\pi(\A)$ the associated linear part of the action, that is, we have $\pi(\A)(g,x) = \A(g,x) - \A(g,0)$ for every $g\in G$ and $x\in X$. We shall often not use any special symbol for the given affine action $G\curvearrowright X$ and write simply $gx$ for the result of acting by $g\in G$ on $x\in X$ and similarly we often write $\pi(g)x$ for $gx-g0$. 

Note that in this case $(\pi(g))_{g\in G}$ defines an action of $G$ on $X$ by linear isometries. Indeed, for every $x\in X$ and $g,h\in G$ we have
\[\begin{split}\pi(g)(\pi(h)(x)) & = \pi(g)(hx-h0) = \pi(g)(hx)-\pi(g)(h0)\\ & = (ghx-g0)-(gh0-g0) = ghx-gh0=\pi(gh)(x).\end{split}\]
and $\pi(e)x = ex-e0 = x-0=x$.

Notice that when a group $G$ acts by linear isometries on a Banach space $X$, then there is a dual action by linear isometries of $G$ on $X^*$ defined by $gf(x):=f(g^{-1}x)$, where $g\in G$, $f\in X^*$, and $x\in X$.

If $G$ acts on $X$ by affine isometries, then by $\pi(g)x^*$ we denote dual action of $\pi(g)$ on $X^*$, that is, $\pi(g)x^*(x) = x^*(\pi(g^{-1})x)$, $x\in X$.

\section{Projections associated to group actions in a general Banach space}\label{sec:banach}

The starting point here is an observation that there is a very general method of constructing a linear projection from an action of a compact group on a Banach space (see part \eqref{it:cpctGroup} of Lemma~\ref{lem:amenableOperator}). The main outcome of this section is contained in Lemma~\ref{lem:amenableOperator} and Theorem~\ref{thm:amenableLinearOperator}, where we summarize what analogies are available in the more general setting of amenable groups. 

The results from this section will be used in Section~\ref{sec:free}, where everything is investigated in the setting of Lipschitz-free spaces.

We start with a lemma which will be used several times in what follows. Note that if $G$ acts on a Banach space $X$ by linear isometries, then its orbits $Gx$ are bounded in diameter by $2\|x\|$ for every $x\in X$, so the following result applies in particular to actions by linear isometries.

\begin{Lemma}\label{lem:mostGeneralAction}
Let $X$ be a Banach space, $G$ be a group acting continuously on $X$ by isometries such that some (or equivalently every) orbit is bounded. Pick $x\in X$, $x^*\in X^*$ and let $\varphi:G\to\Rea$ be defined by $\varphi(g)=x^*(gx)$, $g\in G$. Then $\varphi\in \C^b_{ru}(G)$. Moreover,
\begin{enumerate}
    \item if $G$ is locally compact, then $\varphi\in L_\infty(G)$; and
    \item if $G$ is SIN, then $\varphi\in \C^b_{u}(G)$.
\end{enumerate}
\end{Lemma}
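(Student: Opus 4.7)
The proof splits along the three claims, and the plan is to first establish boundedness, then right-uniform continuity (which is the ``free'' piece because the $G$-action consists of isometries), and finally bootstrap to the additional regularity in the two special cases.

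Boundedness is immediate: the standing assumption says $Gx$ is a bounded subset of $X$, so $|\varphi(g)|\le \|x^*\|\,\|gx\|\le \|x^*\|(\diam(Gx)+\|g_0x\|)$ for any fixed $g_0\in G$. For the main statement $\varphi\in\C^b_{ru}(G)$, I would exploit the fact that every $h\in G$ acts as an affine isometry (Mazur--Ulam): for $g,h\in G$,
\[
|\varphi(hg)-\varphi(h)| = |x^*(hgx)-x^*(hx)| \le \|x^*\|\,\|hgx-hx\| = \|x^*\|\,\|gx-x\|,
\]
so that $\sup_{h\in G}|(\varphi\cdot g^{-1})(h)-\varphi(h)| \le \|x^*\|\,\|gx-x\|\to 0$ as $g\to e$ by the joint continuity of the action. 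Continuity of $g\mapsto \varphi\cdot g^{-1}$ at an arbitrary $g_0$ follows by noting that $\varphi\cdot g_0^{-1}$ is a function of the same form with $x$ replaced by $g_0x$, so the argument at $e$ applies verbatim.

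Claim (1) is then essentially a corollary: right-uniform continuity implies ordinary continuity of the scalar function $\varphi$, which is therefore Borel measurable and, being bounded, lies in $L_\infty(G)$ for any locally compact $G$ equipped with its Haar measure. For Claim (2) I must additionally show that $g\mapsto g^{-1}\cdot\varphi$ is continuous, i.e.\ that $\sup_h|\varphi(gh)-\varphi(h)|\to 0$ as $g\to e$. The asymmetry is that now
\[
\|ghx-hx\| = \|h^{-1}ghx-x\|
\]
(once again using that $h^{-1}$ is an affine isometry), so I need $h^{-1}gh$ to lie in a prescribed neighborhood of $e$ \emph{uniformly in $h$}. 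This is precisely what the SIN property delivers: given $\epsilon>0$, first choose a neighborhood $U$ of $e$ with $\|yx-x\|<\epsilon/\|x^*\|$ for $y\in U$ (joint continuity of the action at $(e,x)$), then use SIN to obtain a conjugation-invariant neighborhood $V\subset U$; for $g\in V$ and every $h\in G$ the element $h^{-1}gh$ lies in $V\subset U$, and the displayed estimate closes the argument.

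The only step with a genuine idea is the uniformisation in $h$ in the SIN case; everything else is routine bookkeeping around ``each $h$ is an affine isometry''. I anticipate no real obstacle.
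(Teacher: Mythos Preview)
Your proposal is correct and follows essentially the same route as the paper: boundedness from the bounded-orbit hypothesis, right-uniform continuity from the estimate $\|hgx-hx\|=\|gx-x\|$ (each $h$ an affine isometry) together with continuity of the orbit map at $e$, the locally compact case as an immediate consequence of continuity plus boundedness, and the SIN case via the conjugation trick $\|ghx-hx\|=\|h^{-1}ghx-x\|$ controlled uniformly in $h$ by a conjugation-invariant neighborhood. The only cosmetic difference is that you spell out why continuity at $e$ of $g\mapsto\varphi\cdot g^{-1}$ propagates to all of $G$, which the paper leaves implicit.
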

\begin{proof}
Since the orbit $Gx$ is bounded, we obtain $\varphi\in\ell_\infty(G)$.

In order to check that $\varphi\in\C^b_{ru}(G)$, pick $\varepsilon > 0$ and using the continuity of the map $h\mapsto hx$ pick a neighborhood $U$ of $e$ with $\|hx-x\|<\varepsilon$ for every $h\in U$. Then for every $h\in U$ we obtain
\[\begin{split}\|\varphi\cdot h^{-1}   - \varphi\|_{\infty} & = \sup_{g\in G}|x^*(ghx)-x^*(gx)| \\ & \leq \|x^*\|\sup_{g\in G}\|ghx-gx\|  = \|x^*\|\|hx-x\|\\ &  <  \varepsilon\|x^*\|,\end{split}\]
so the mapping $h\mapsto  \varphi\cdot h^{-1}$ is continuous and we have $\varphi\in \C^b_{ru}(G)$.

Assume that $G$ is SIN. In order to check that $\varphi\in\C^b_{lu}(G)$, pick $\varepsilon > 0$ and using the continuity of the map $h\mapsto hx$ pick a neighborhood $U$ of $e$ with $\|hx-x\|<\varepsilon$ for every $h\in U$. Since $G$ is SIN, we find neighborhood $V$ of $e$ with $gVg^{-1}\subset U$ for every $g\in G$. Then for every $h\in V$ we obtain
\[\begin{split}\|h^{-1}\cdot \varphi  - \varphi\|_{\infty} & =  \sup_{g\in G}|x^*(hgx)-x^*(gx)| \\ & \leq \|x^*\|\sup_{g\in G}\|hgx-gx\|\\ &  = \|x^*\|\sup_{g\in G}\|g^{-1}hgx-x\| < \varepsilon\|x\|\end{split},\]
which shows that $\varphi\in \C^b_{lu}(G)$.

Finally, if $G$ is locally compact, then it suffices to observe that $\varphi$ is continuous and since we already know it is bounded, we obtain $\varphi\in L_\infty(G)$.
\end{proof}

\begin{Lemma}\label{lem:amenableOperator}Let $X$ be a Banach space, $G$ an amenable group acting continuously by affine isometries on $X$ such that $G$ is either locally compact or SIN. Let $M$ be bi-invariant mean on $G$. Then the mapping $R_G:X\to X^{**}$ defined as
\[R_G(x)(x^*):=M\big(g\mapsto x^*(\pi(g)\cdot x)\big), \quad x\in X, x^*\in X^*\]
is well-defined linear operator with $\|R_G\|\leq 1$ and the following holds.
\begin{enumerate}
    \item\label{it:invariantRg} For every $g\in G$ and $x\in X$ we have \[\pi(g)\cdot R_G(x) = R_G(x) = R_G(\pi(g)x).\]
    \item\label{it:rangeRg} 
    \[R_G[X]\cap X=\{x\in X\colon \pi(g)x = x\text{ for every }g\in G\}=:I_G\]
    and $R_G(x)=x$ for every $x\in I_G$.
    \item\label{it:projection} $\ker R_G\cap I_G = \{0\}$, and the canonical projection of $(\ker R_G) + I_G$ onto $I_G$ along $\ker R_G$ has norm at most $1$.
    
    \item\label{it:dualProjection} The mapping $\widetilde{P_G}:=(R_G)^*\circ \kappa_{X^*}:X^*\to X^*$ is a norm-one projection with
    \[\widetilde{P_G}[X^*] = \{x^*\in X^*\colon \pi(g)x^* = x^*\}=:\widetilde{I_G}.\]
    \item\label{it:kernel} We have
    \[\ker R_G = (\widetilde{I_G})_\perp.\]
\end{enumerate}

    Moreover,
    \begin{enumerate}[\upshape (a)]
        \item if some (or equivalently every) orbit is bounded, then for the operator $R_G$ we have $R_G(gx)=R_G(x)$ and $\widetilde{P_G}(x^*)(gx)=\widetilde{P_G}(x^*)(x)$ for every $x\in X$, $x^*\in X^*$ and $g\in G$;
        \item\label{it:cpctGroup} if the group $G$ is compact, then $R_G:X\to X$ is a norm-one projection onto $I_G$ and
    \begin{equation}\label{eq:cpctProjection}R_G(x) = \int_G \pi(g)x \; d\mu(g),\quad x\in X,\end{equation}
    where $\mu$ is the bi-invariant probability Haar measure on $G$.
    \end{enumerate}
\end{Lemma}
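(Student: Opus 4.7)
The plan is to check the statements in the order listed, first securing well-definedness, then derivation of the five numbered properties from a single identity, and finally handling the two ``moreover'' items.

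First I would verify that $R_G$ is well-defined and bounded. The function $\varphi_{x,x^*}(g):=x^*(\pi(g)x)$ is bounded since $\pi(g)$ is a linear isometry (so $\|\pi(g)x\|=\|x\|$, which gives an automatically bounded orbit for the linear action). Applying Lemma~\ref{lem:mostGeneralAction} to the linear action $\pi$, we obtain $\varphi_{x,x^*}\in L_\infty(G)$ when $G$ is locally compact and $\varphi_{x,x^*}\in \C^b_u(G)$ when $G$ is SIN; in both cases $\varphi_{x,x^*}$ lies in the domain of the bi-invariant mean $M$. Linearity of $R_G(x)$ in $x^*$ and of $x\mapsto R_G(x)$ follows from the linearity of $M$, $x^*$, and $\pi(g)$. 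The estimate
\[
|R_G(x)(x^*)|\le \|M\|\cdot\sup_{g}|x^*(\pi(g)x)|\le \|x^*\|\,\|x\|
\]
gives $\|R_G\|\le 1$.

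Next I would establish the key invariance identity~\eqref{it:invariantRg}. Writing $\varphi=\varphi_{x,x^*}$, the function $h\mapsto x^*(\pi(hg)x)$ equals the right translate $\varphi\cdot g^{-1}$, so $R_G(\pi(g)x)(x^*)=M(\varphi\cdot g^{-1})=M(\varphi)=R_G(x)(x^*)$ by right-invariance. For $\pi(g)R_G(x)=R_G(x)$ (the dual-of-dual action on $X^{**}$), the same computation with the left translate $g\cdot\varphi$ applies, using left-invariance. Parts \eqref{it:rangeRg}--\eqref{it:kernel} then follow formally. If $y\in I_G$ then $\varphi_{y,x^*}$ is the constant $x^*(y)$, whence $R_G(y)=\kappa_X(y)$, which both gives $R_G(y)=y$ and, combined with~\eqref{it:invariantRg}, proves $R_G[X]\cap X=I_G$. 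From $R_G|_{I_G}=\kappa_X|_{I_G}$ one reads off \eqref{it:projection}: the decomposition $z=k+y$ with $k\in\ker R_G$, $y\in I_G$ is unique, and $\|y\|=\|\kappa_X(y)\|=\|R_G(z)\|\le\|z\|$. For \eqref{it:dualProjection}, a direct computation shows
\[
\widetilde{P_G}(x^*)(y)=R_G(y)(x^*),
\]
which by right-invariance is fixed under the dual action and equals $x^*(y)$ when $x^*\in\widetilde{I_G}$ (then $\varphi_{y,x^*}$ is constant), so $\widetilde{P_G}$ is a norm-one projection onto $\widetilde{I_G}$. Finally, the same formula gives that $x\in\ker R_G$ is equivalent to $\widetilde{P_G}(x^*)(x)=0$ for every $x^*\in X^*$, which is precisely $x\in (\widetilde{I_G})_\perp$, proving \eqref{it:kernel}.

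For item (a) the subtlety is that the action is only affine. Using $gx=\pi(g)x+g\cdot 0$ and $\pi(h)(g\cdot 0)=hg\cdot 0-h\cdot 0$, one expands
\[
R_G(gx)(x^*)=M\bigl(h\mapsto x^*(\pi(hg)x)\bigr)+M\bigl(h\mapsto x^*(hg\cdot 0)\bigr)-M\bigl(h\mapsto x^*(h\cdot 0)\bigr).
\]
Here I would invoke Lemma~\ref{lem:mostGeneralAction} applied to the \emph{affine} action (using the bounded-orbit hypothesis) to conclude that $h\mapsto x^*(h\cdot 0)$ lies in the domain of $M$; right-invariance cancels the last two terms and identifies the first with $R_G(x)(x^*)$. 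The statement for $\widetilde{P_G}$ is immediate from $\widetilde{P_G}(x^*)(y)=R_G(y)(x^*)$. For (b), when $G$ is compact the mean is $w^*$-continuous and equals $\int_G\cdot\,d\mu$, so $g\mapsto\pi(g)x$ being norm-continuous and bounded admits a Bochner integral $R'_G(x):=\int_G\pi(g)x\,d\mu(g)\in X$; testing against $x^*$ shows $\kappa_X(R'_G(x))=R_G(x)$, hence $R_G$ takes values in $X$. Since \eqref{it:invariantRg} places this value in $I_G$ and \eqref{it:rangeRg} gives $R_G|_{I_G}=\mathrm{Id}$, $R_G$ is a norm-one projection onto $I_G$ realized by~\eqref{eq:cpctProjection}.

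The main obstacle, both technical and bookkeeping, is part (a): the affine nature of the action forces the splitting via $\pi(h)(g\cdot 0)$, and one has to verify that the auxiliary functions $h\mapsto x^*(h\cdot 0)$ belong to the precise function algebra on which the bi-invariant mean is defined, which is exactly where the boundedness of orbits is used.
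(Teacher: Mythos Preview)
Your proposal is correct and follows essentially the same route as the paper: the well-definedness via Lemma~\ref{lem:mostGeneralAction} applied to the linear action $\pi$, the invariance identities from left/right translates, the formal derivation of \eqref{it:rangeRg}--\eqref{it:kernel} from the formula $\widetilde{P_G}(x^*)(y)=R_G(y)(x^*)$, and the splitting $\pi(h)(g\cdot 0)=hg\cdot 0-h\cdot 0$ in part~(a) all match the paper's argument. The only cosmetic difference is in~(b), where you realize $R_G(x)\in X$ via the Bochner integral of $g\mapsto\pi(g)x$, whereas the paper phrases this as $R_G(x)\in (X^*,w^*)^*=X$ by $w^*$-continuity of the mean; these are two ways of saying the same thing.
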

\begin{proof}It follows directly from the corresponding definitions and from Lemma~\ref{lem:mostGeneralAction} (applied to the action $(\pi(g))_{g\in G}$ on $X$ by linear isometries) that $R_G$ is well-defined, linear and $\|R_G\|\leq 1$. Pick $g\in G$, $x^*\in X^{*}$ and $x\in X$. Using the left-invariance of $M$ we have
\[\begin{split}
(\pi(g)\cdot R_G(x)) (x^*) & = R_G(x)(\pi(g^{-1})\cdot x^*) = M\Big(h\mapsto x^*(\pi(gh)x)\Big)\\ &  = M\Big(h\mapsto  x^*(\pi(h)\cdot x) \Big) =  R_G(x)(x^*),
 \end{split}\]
 so we have $\pi(g)\cdot R_G(x) = R_G(x)$ and $R_G[X]\cap X\subset I_G$. Similarly, right-invariance of $M$ gives $R_G(\pi(g)x) = R_G(x)$, which implies \eqref{it:invariantRg}.
 
 Pick $x\in I_G$, then for every $x^*\in X^*$ we have
 \[R_G(x)(x^*) = M\Big(g\mapsto x^*(\pi(g)x)\Big) = M\Big(g\mapsto x^*(x)\Big) = x^*(x),\]
 which implies that $I_G\subset R_G[X]$ and $R_G(x)=x$ for every $x\in I_G$, so \eqref{it:rangeRg} holds.
 
For \eqref{it:projection} we observe that for any $x\in \ker R_G$ and $y\in I_G$ we obtain $\|y\| = \|R_G(x+y)\|\leq \|R_G\|\|x+y\|$ from which \eqref{it:projection} follows.

For \eqref{it:dualProjection} we observe that for $x\in X$ and $x^*\in X^*$ we have
\[\widetilde{P_G}(x^*)(x) = R_G^*\kappa_{X^*}(x^*)(x) = \kappa_{X^*}(x^*)(R_G x) = R_G(x)(x^*),\]
which, using \eqref{it:invariantRg}, implies that for every $g\in G$
\[\begin{split}
    \pi(g)\widetilde{P_G}(x^*)(x) & = \widetilde{P_G}(x^*)(\pi(g^{-1}x)) = R_G(\pi(g^{-1})x)(x^*) = R_G(x)(x^*)\\ & = \widetilde{P_G}(x^*)(x),
\end{split}\]
so we have $\widetilde{P_G}[X^*]\subset \widetilde{I_G}$. Conversely, for $x^*\in \widetilde{I_G}$ we obtain
\[\begin{split}\widetilde{P_G}(x^*)(x) & = R_G(x)(x^*) = M\Big(g\mapsto x^*(\pi(g)x)\Big)\\ & = M\Big(g\mapsto \pi(g^{-1})x^*(x)\Big) = M\Big(g\mapsto x^*(x)\Big) = x^*(x),\end{split}\]
so we have $\widetilde{P_G}(x^*) = x^*$ and so $\widetilde{P_G}|_{\widetilde{I_G}} = \Id$, which implies \eqref{it:dualProjection}.

For \eqref{it:kernel} pick $x\in X$. If $x\in \ker R_G$, then for every $x^*\in \widetilde{I_G}$ we have
\[
x^*(x) = \widetilde{P_G}(x^*)(x) = R_G(x)(x^*) = 0.
\]
On the other hand, if $x\in (\widetilde{I_G})_\perp$, then for every $x^*\in X^*$ we obtain
\[
R_G(x)(x^*) = \widetilde{P_G}(x^*)(x) = 0,
\]
so we have $x\in \ker R_G$.

 For the ``Moreover'' part we compute (using Lemma~\ref{lem:amenableOperator} by which all the evaluations of the function $M$ below make sense)
 \[\begin{split}
    R_G(gx)(x^*) & = M\Big(h\mapsto x^*(\pi(h)\cdot gx)\Big)\\ & = M\Big(h\mapsto x^*(\pi(hg)\cdot x)\Big) + M\Big(h\mapsto x^*(\pi(h)\cdot g0)\Big)\\ & = R_G(x)(x^*) + M\Big(h\mapsto x^*(hg0)\Big) - M\Big(h\mapsto x^*(h0)\Big)\\ & = R_G(x)(x^*),
\end{split}\]
where in the last two equalities we used the right-invariance of $M$.

Finally, if $G$ is compact, using the $w^*$-continuity of $M$ we have that $R_G(x)\in (X^*,w^*)^* = X$ and using that in this case $M$ is given by $M(f) = \int_G f(g) \; d\mu(g)$, we obtain the formula \eqref{eq:cpctProjection}.
\end{proof}

As we have seen above, if $G$ is compact we obtain that $I_G$ is complemented in $X$. This may be achieved also for a more general class of groups $G$ under the assumption that $X$ is complemented in its bidual, see Theorem~\ref{thm:amenableLinearOperator}. As we shall see later, the assumptions of Theorem~\ref{thm:amenableLinearOperator} are satisfied e.g. if $X$ is a dual space or if it is an $L$-embedded space.

\begin{Theorem}\label{thm:amenableLinearOperator}
Let $X$ be a Banach space and $G$ amenable group acting continuously on $X$ by affine isometries such that $X$ is complemented in its bidual via a projection $Q$ satisfying $\pi(G)\cdot \ker Q\subset \ker Q$, that is, $\pi(g)\cdot x^{**}\in \ker Q$ for every $g\in G$ and $x^{**}\in \ker Q$. Assume $G$ is locally compact or SIN and $M$ is bi-invariant mean on $G$.

If $R_G:X\to X^{**}$ and $I_G$ are as in Lemma~\ref{lem:amenableOperator}, the mapping $P_G:=Q\circ R_G:X\to X$ is projection of norm $\leq \|Q\|$ with $P_G[X] = I_G$ and $P_G[X]$ is a complemented subspace of $R_G[X]$.

Moreover, if some (or equivalently every) orbit is bounded, then $P_G(x) = P_G(gx)$ for every $g\in G$ and $x\in X$.
\end{Theorem}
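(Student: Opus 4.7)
The plan is to exploit the fact that $R_G$ already takes values in the $\pi(G)$-fixed part of $X^{**}$, and then use $Q$ to compress back into $X$ while preserving that fixedness. First I note that $\|R_G\|\leq 1$ by Lemma~\ref{lem:amenableOperator}, so $P_G=Q\circ R_G$ is bounded with $\|P_G\|\leq \|Q\|$; the task reduces to showing $P_G[X]=I_G$ and $P_G|_{I_G}=\Id$.

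For the inclusion $P_G[X]\subset I_G$, I would fix $x\in X$ and use Lemma~\ref{lem:amenableOperator}\eqref{it:invariantRg} to get $\pi(g)\cdot R_G(x)=R_G(x)$ in $X^{**}$ for every $g\in G$. Writing $R_G(x)=\kappa_X(P_G(x))+k$ with $k:=R_G(x)-\kappa_X(P_G(x))$ and applying $Q$ together with $Q\circ\kappa_X=\Id_X$, one gets $k\in\ker Q$. Combining the natural equivariance $\pi(g)\kappa_X(y)=\kappa_X(\pi(g)y)$ for $y\in X$ with the hypothesis $\pi(g)\cdot\ker Q\subset\ker Q$, the two decompositions of $\pi(g)R_G(x)=R_G(x)$ yield
\[\kappa_X\bigl(P_G(x)-\pi(g)P_G(x)\bigr)=\pi(g)k-k\in\ker Q.\]
Applying $Q$ once more forces $\pi(g)P_G(x)=P_G(x)$, so $P_G(x)\in I_G$. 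Conversely, for $x\in I_G$, Lemma~\ref{lem:amenableOperator}\eqref{it:rangeRg} gives $R_G(x)=\kappa_X(x)$, hence $P_G(x)=x$. Thus $P_G$ is a projection of $X$ onto $I_G$ with norm at most $\|Q\|$.

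For the complementation of $P_G[X]=I_G$ inside $R_G[X]$, I would define $\Pi\colon R_G[X]\to R_G[X]$ by $\Pi(R_G(x)):=\kappa_X(P_G(x))$. This is well-defined because $P_G(x)=Q(R_G(x))$ depends only on $R_G(x)$, it is linear with $\|\Pi(R_G(x))\|\leq\|Q\|\,\|R_G(x)\|$, and since $R_G$ coincides with $\kappa_X$ on $I_G$ it takes values in the canonical copy of $I_G$ inside $R_G[X]$ and is the identity there, so it is the desired projection. The ``moreover'' clause is immediate from the ``moreover'' part of Lemma~\ref{lem:amenableOperator}: boundedness of the orbits gives $R_G(gx)=R_G(x)$, whence $P_G(gx)=P_G(x)$. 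The genuine content of the whole argument---and its only delicate moment---is pushing the $\pi(G)$-invariance of $R_G(x)\in X^{**}$ down to $P_G(x)\in X$; this is precisely where the hypothesis $\pi(G)\cdot\ker Q\subset\ker Q$ is used, and without it nothing forces $P_G$ to land in $I_G$.
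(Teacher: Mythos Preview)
Your argument is correct and follows essentially the same route as the paper: both decompose $R_G(x)$ along $X^{**}=\kappa_X(X)\oplus\ker Q$ and use the hypothesis $\pi(G)\cdot\ker Q\subset\ker Q$ together with uniqueness of that decomposition to push the $\pi(G)$-invariance of $R_G(x)$ down to $P_G(x)$, then invoke Lemma~\ref{lem:amenableOperator}\eqref{it:rangeRg} for $P_G|_{I_G}=\Id$ and restrict $Q$ to $R_G[X]$ (your $\Pi$) for the complementation of $I_G$ in $R_G[X]$. The paper phrases the key step as a slightly more general claim (for any $\pi(h)$-fixed $\zeta\in X^{**}$) and keeps the identification $X\subset X^{**}$ implicit, but these are purely cosmetic differences.
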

\begin{proof}By Lemma~\ref{lem:amenableOperator}, we have that $P_G$ is linear operator with $\|P_G\|\leq \|Q\|$ and $\pi(g)\cdot R_G(x) = R_G(x)$ for every $x\in X$ and $g\in G$.

We \emph{claim} that $\pi(g)\cdot P_G(x) = P_G(x)$ for every $x\in X$ and $g\in G$. Actually, something more general holds.
\begin{equation}\label{claim:invariant}
\forall\zeta\in X^{**} \; \forall h\in G:\qquad \pi(h)\zeta = \zeta\Rightarrow \pi(h) Q\zeta = Q\zeta.\end{equation}
Indeed, pick $\zeta\in X^{**}$ and $h\in G$ with $\pi(h)\zeta = \zeta$. Then we have
 \[\zeta = Q\zeta + (I-Q)(\zeta)\]
 and also
 \[\zeta = \pi(h) Q\zeta + \pi(h) (I-Q) (\zeta),\]
 so since we have $\pi(h)Q\zeta \in X$ and $\pi(h)(I-Q)(\zeta)\in \ker Q$ by the assumption, using the uniqueness of the decomposition we obtain that $\pi(h)Q\zeta = Q\zeta$, which proves \eqref{claim:invariant} and so the claim is proved.
 
  By the above, we have $P_G[X]\subset I_G$. Moreover, by Lemma~\ref{lem:amenableOperator} we have $R_G(x)=x$ for every $x\in I_G$ which together with $P_G[X]\subset I_G$ implies that $P_G$ is a projection onto $I_G$.
 
 By Lemma~\ref{lem:amenableOperator} we have $P_G[X]= R_G[X]\cap X$, so $Q|_{R_G[X]}$ is a projection onto $P_G[X]$.

For the proof of the ``Moreover'' part we use the ``Moreover'' part of Lemma~\ref{lem:amenableOperator}.
\end{proof}

Let us observe that the assumptions of the previous result are met for example if $X$ is a dual space or if it is an $L$-embedded space.

\begin{Lemma}\label{lem:dualSpacesWorse}
Let $Y$ be a Banach space and $G$ be a group acting by affine isometries on $Y^*$ such that $\pi(g)x\in Y$ for every $x\in Y$. Then $Q:=\kappa_{Y^{*}}\circ (\kappa_Y)^*:Y^{***}\to Y^{***}$ is norm-one projection with $Q[Y^{***}] = \kappa_{Y^*}(Y^*)$ satisfying $\pi(G)\cdot \ker Q\subset \ker Q$.

Moreover, if $G$ is locally compact or SIN and it acts continuously on $Y^*$, $M$ is bi-invariant mean on $G$ and $R_G$, $P_G$ are as in Theorem~\ref{thm:amenableLinearOperator} applied to the space $X = Y^*$, then for every $x\in X$ we have $P_G(x) = R_G(x)|_Y$.
\end{Lemma}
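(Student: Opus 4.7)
The plan is to establish the projection properties of $Q$ by direct computation from a single duality identity, then read off the invariance of $\ker Q$ from how the group action interacts with the canonical embedding, and finally unpack the definition of $P_G$. First I would verify the identity $(\kappa_Y)^* \circ \kappa_{Y^*} = \Id_{Y^*}$: for $y^* \in Y^*$ and $y \in Y$,
\[(\kappa_Y)^*(\kappa_{Y^*}(y^*))(y) = \kappa_{Y^*}(y^*)(\kappa_Y(y)) = \kappa_Y(y)(y^*) = y^*(y).\]
This yields $Q^2 = \kappa_{Y^*}\circ ((\kappa_Y)^*\circ \kappa_{Y^*})\circ (\kappa_Y)^* = Q$, and since $\|\kappa_{Y^*}\| = \|(\kappa_Y)^*\| = 1$, also $\|Q\| = 1$ (the reverse inequality being automatic for a nonzero projection). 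The range is visibly contained in $\kappa_{Y^*}(Y^*)$, while the identity $Q \circ \kappa_{Y^*} = \kappa_{Y^*}$ gives the reverse inclusion, proving $Q[Y^{***}] = \kappa_{Y^*}(Y^*)$.

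Next I would identify $\ker Q = (\kappa_Y(Y))^\perp$: $\Phi \in \ker Q$ iff $(\kappa_Y)^*(\Phi) = 0$ iff $\Phi$ vanishes on $\kappa_Y(Y) \subset Y^{**}$. The hypothesis ``$\pi(g)x \in Y$ for every $x \in Y$'' is to be read as saying that the canonical extension of $\pi(g)$ to $(Y^*)^* = Y^{**}$ sends $\kappa_Y(Y)$ into itself; since each $\pi(g)$ is an isometric bijection, so is $\pi(g^{-1})$, which therefore also preserves $\kappa_Y(Y)$. For $\Phi \in \ker Q$ and $y \in Y$ this gives $\pi(g^{-1})\kappa_Y(y) \in \kappa_Y(Y)$, whence
\[(\pi(g)\Phi)(\kappa_Y(y)) = \Phi(\pi(g^{-1})\kappa_Y(y)) = 0,\]
so $\pi(g)\Phi \in \ker Q$, as required.

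For the ``moreover'' part I would unwind the definition of $P_G$ from Theorem~\ref{thm:amenableLinearOperator} under the identification $X = Y^* \cong \kappa_{Y^*}(Y^*) = Q[Y^{***}]$: one has
\[\kappa_{Y^*}(P_G(x)) = Q(R_G(x)) = \kappa_{Y^*}\bigl((\kappa_Y)^*(R_G(x))\bigr),\]
so injectivity of $\kappa_{Y^*}$ yields $P_G(x) = (\kappa_Y)^*(R_G(x))$. Evaluated at $y \in Y$ this reads $P_G(x)(y) = R_G(x)(\kappa_Y(y))$, which is exactly the restriction of $R_G(x) \in Y^{***}$ to $\kappa_Y(Y) \cong Y$, i.e. the claimed $P_G(x) = R_G(x)|_Y$. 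The only real obstacle throughout is bookkeeping of the two identifications $Y \leftrightarrow \kappa_Y(Y) \subset Y^{**}$ and $Y^* \leftrightarrow \kappa_{Y^*}(Y^*) \subset Y^{***}$, which must be applied consistently both in interpreting the group hypothesis and in extracting $P_G$ from $Q \circ R_G$; once this is settled, each step is a transparent consequence of the duality between $\kappa_Y$ and $\kappa_{Y^*}$.
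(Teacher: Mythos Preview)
Your proof is correct and follows essentially the same route as the paper's: the paper simply asserts as ``well-known'' the projection properties of $Q$ and the description $\ker Q = \{f\in Y^{***}\colon f\circ\kappa_Y\equiv 0\}$, whereas you spell these out via the identity $(\kappa_Y)^*\circ\kappa_{Y^*}=\Id_{Y^*}$; the kernel-invariance step and the unwinding of $P_G(x)=(\kappa_Y)^*R_G(x)$ then match the paper's computations line for line.
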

\begin{proof}
It is well-known (and easy to check) that $Q$ is norm-one projection onto $\kappa_{Y^*}(Y^*)$ with $\ker Q = \{f\in Y^{***}\colon f\circ \kappa_Y \equiv 0\}$. Pick $f\in\ker Q$, $g\in G$ and $y\in Y$. By the assumption there exists $z\in Y$ with $\pi(g^{-1})\kappa_Yy = \kappa_Yz$, so we have
\[\pi(g)f(\kappa_Yy) = f(\pi(g^{-1})\kappa_Yy) = f(\kappa_Yz) = 0,\]
so $\pi(g)f\in \ker Q$.

For the ``Moreover'' part we observe that for $x\in X$ and $y\in Y$ we have
\[P_G(x)(y) = (QR_G(x))(\kappa_Yy) = ((\kappa_Y)^*R_G(x))(y) = R_G(x)(\kappa_Yy).\]
\end{proof}

The following applies in particular e.g. to $L$-embedded spaces, or more generally to $L^p$-summands in the bidual for every $p\in [1,\infty)$.

\begin{Lemma}\label{lem:lembed}
Let $X$ be a Banach space $Q:X^{**}\to X^{**}$ a projection satisfying that $Q[X^{**}] = \kappa_{X}[X]$ and
\[
\|f\| > \|f-Qf\|, \quad f\notin \ker Q. 
\]
If $G$ is a group acting by affine isometries on $X$, then we have $\pi(G)\cdot \ker Q\subset \ker Q$.
\end{Lemma}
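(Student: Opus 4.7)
The plan is to argue by contradiction, exploiting the strict inequality hypothesis symmetrically by pulling a nontrivial decomposition back via $\pi(g^{-1})$. First I would clarify the setup: since $G$ acts on $X$ by affine isometries, each linear part $\pi(g)$ is a linear isometry of $X$ onto itself; its biadjoint $\pi(g)^{**}$ is then a linear isometry of $X^{**}$, which coincides with $\pi(g)$ on $\kappa_X[X]$ (we continue to write $\pi(g)$ for this extension, as elsewhere in the paper). The key structural fact I need is that $\pi(g)$ preserves $\kappa_X[X]$, hence preserves the range $Q[X^{**}]$.

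Now fix $f\in\ker Q$ and $g\in G$, and set $h:=\pi(g)f$. Assume, toward a contradiction, that $h\notin\ker Q$, so $Qh\neq 0$. Writing $h=Qh+(h-Qh)$ with $Qh\in\kappa_X[X]$ and $h-Qh\in\ker Q$, I apply the isometry $\pi(g^{-1})$ to obtain
\[
f \;=\; \pi(g^{-1})Qh \;+\; \pi(g^{-1})(h-Qh) \;=:\; y+z.
\]
Because $\pi(g^{-1})$ preserves $\kappa_X[X]$, we have $y\in\kappa_X[X]=Q[X^{**}]$, so $Qy=y$. From $f\in\ker Q$ I then read off $0=Qf=y+Qz$, i.e.\ $Qz=-y$. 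Since $\pi(g^{-1})$ is a linear isometry on $X^{**}$ and $Qh\neq 0$, it follows that $y\neq 0$, and hence $z\notin\ker Q$.

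The hypothesis on $Q$, applied to $z$, now yields
\[
\|z\| \;>\; \|z-Qz\| \;=\; \|z+y\| \;=\; \|f\|.
\]
But $\|z\|=\|\pi(g^{-1})(h-Qh)\|=\|h-Qh\|$ and $\|f\|=\|\pi(g^{-1})h\|=\|h\|$, so this reads $\|h-Qh\|>\|h\|$. This flatly contradicts the assumed strict inequality $\|h\|>\|h-Qh\|$ valid for every element $h\notin\ker Q$. Therefore $h\in\ker Q$, which is the desired conclusion $\pi(g)\cdot\ker Q\subset\ker Q$.

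The only subtle point, and what I view as the main obstacle, is bookkeeping the extension of $\pi(g)$ from $X$ to $X^{**}$: one must be sure that the biadjoint of a linear isometry on $X$ is an isometry on $X^{**}$ and that it sends $\kappa_X[X]=Q[X^{**}]$ into itself, since this is what lets the decomposition $h=Qh+(h-Qh)$ survive being pulled back by $\pi(g^{-1})$. Everything else is a symmetric use of the strict inequality on both $h$ and $z=\pi(g^{-1})(h-Qh)$.
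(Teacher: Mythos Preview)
Your proof is correct and follows essentially the same route as the paper: decompose $h=\pi(g)f$ as $x+y'$ with $x\in\kappa_X[X]$ and $y'\in\ker Q$, pull back via $\pi(g^{-1})$, observe that $\pi(g^{-1})y'\notin\ker Q$, and derive the contradiction $\|f\|>\|y'\|=\|\pi(g^{-1})y'\|>\|f\|$ by applying the strict inequality once to $h$ and once to $\pi(g^{-1})y'$. Your added remark about the biadjoint extension is a welcome clarification that the paper leaves implicit.
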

\begin{proof}
Pick $f\in \ker Q$ and $g\in G$. In order to get a contradiction, assume that $\pi(g)f\notin \ker Q$. Pick $x\in X\setminus\{0\}$ and $y'\in \ker Q$ such that $\pi(g)f = x + y'$. Then we have $\|\pi(g)f\| > \|y'\|$ and since $G$ acts by isometries, we obtain $\|f\| > \|y'\|$. Moreover, since the action $\pi(G)$ is linear, we obtain $f = \pi(g^{-1})x + \pi(g^{-1})y'$ and since $G$ acts on $X$ we have $\pi(g^{-1})x\in X$ and $\pi(g^{-1})y' = - \pi(g^{-1})x + f$, so $\pi(g^{-1})y'\notin\ker Q$ and $\pi(g^{-1})y' - Q\pi(g^{-1})y' = f$. Thus, we obtain
\[\|f\| > \|y'\| = \|\pi(g^{-1})y'\| > \|\pi(g^{-1})y' - Q\pi(g^{-1})y'\| = \|f\|, \]
a contradiction.
\end{proof}

\section{Projections associated to group actions in Lipschitz-free spaces}\label{sec:free}

In this main part of our paper we aim at identifying the images of the mappings $R_G$ and $P_G$ from Lemma~\ref{lem:amenableOperator}, and Theorem~\ref{thm:amenableLinearOperator} in the case the Banach space we deal with is a Lipschitz-free space. The main abstract outcomes of our considerations are concentrated in Subsection~\ref{subsec:mainSubsection} and some special cases having connection to the class of purely 1-unrectifiable metric spaces are explained in Subsection~\ref{subsec:mainSubsectionDual}. Theorem~\ref{thm:intro} follows from Theorem~\ref{thm:freeSpaceAmenableOperator}, its consequence Corollary~\ref{cor:mainCpctGroup} and from Theorem~\ref{thm:dual}.

In Subsection~\ref{subsec:prelim} we settle some basic notation and explain why it suffices to consider the case of actions by isometries. Then we describe the quotient space $\MM/G$ and identify both the Lipschitz-free space over $\MM/G$ and its dual. Finally, we come to our main abstract results and some special cases related to purely 1-unrectifiable metric spaces.

\subsection{Preliminaries}\label{subsec:prelim} Fix an amenable group $G$ and suppose that $G$ acts continuously by bi-Lipschitz isomorphisms on a metric space $\MM$ so that the Lipschitz constants of the maps are uniformly bounded. First we use the standard trick to reduce such a general case to the case when $G$ acts by isometries.
\begin{Lemma}\label{lem:averagingmetric}
Let $G$ be an amenable group which acts continuously on a metric space $(\MM,d)$ by bi-Lipschitz isomorphisms whose norms are uniformly bounded. Then there exists a new metric $D$ on $\MM$, bi-Lipschitz equivalent to $d$, such that the formally same action of $G$ on $(\MM,D)$ is by isometries.

Quantitatively, if there are $0<r<R<\infty$ with  \begin{equation}\label{eq:biLipMetric}
    r d(gx,gy)\leq d(x,y)\leq R d(gx,gy),\qquad x,y\in\MM,\; g\in G,
    \end{equation}then we have
$rD(x,y)\leq d(x,y)\leq RD(x,y)$ for every $x,y\in\MM$.
\end{Lemma}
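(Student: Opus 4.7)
The plan is to define $D$ by averaging the original distance over the $G$-orbit of the pair $(x,y)$. By the Remark on the existence of right-invariant means, amenability of $G$ yields a right-invariant mean $M:\C^b_{ru}(G)\to\Rea$, and I would set
\[D(x,y):=M\bigl(g\mapsto d(gx,gy)\bigr),\qquad x,y\in\MM.\]
Then I have to verify four points: that each $\varphi_{x,y}(g):=d(gx,gy)$ lies in $\C^b_{ru}(G)$ so the definition is meaningful; that $D$ is a metric; that $rD\le d\le RD$; and that $G$ acts by $D$-isometries.

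The first point is the only place the hypotheses on the action are really used, and it is a direct non-linear analogue of Lemma~\ref{lem:mostGeneralAction}. Boundedness of $\varphi_{x,y}$ is immediate from \eqref{eq:biLipMetric}, which gives $d(gx,gy)\le d(x,y)/r$. For right-uniform continuity, given $\varepsilon>0$ the continuity of the action at $e$ supplies a neighbourhood $U$ of $e$ with $d(hx,x),d(hy,y)<r\varepsilon/2$ on $U$; then for $h\in U$ and any $g\in G$, the triangle inequality together with \eqref{eq:biLipMetric} applied to $g$ yield
\[|d(ghx,ghy)-d(gx,gy)|\le d(ghx,gx)+d(ghy,gy)\le \tfrac{1}{r}\bigl(d(hx,x)+d(hy,y)\bigr)<\varepsilon.\]
Since $(\varphi_{x,y}\cdot h^{-1})(g)=\varphi_{x,y}(gh)=d(ghx,ghy)$, this says $\|\varphi_{x,y}\cdot h^{-1}-\varphi_{x,y}\|_\infty<\varepsilon$ for $h\in U$, so $\varphi_{x,y}\in\C^b_{ru}(G)$.

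The remaining points are bookkeeping built on positivity, linearity and normalisation of $M$. Sandwiching $d(x,y)/R\le d(gx,gy)\le d(x,y)/r$ pointwise in $g$ and applying $M$ yields $rD(x,y)\le d(x,y)\le RD(x,y)$, which gives both the claimed bi-Lipschitz bound and non-degeneracy of $D$ (if $D(x,y)=0$ then $d(x,y)\le RD(x,y)=0$). Symmetry is inherited from $d(gx,gy)=d(gy,gx)$, and the triangle inequality for $D$ follows by applying $M$ to the pointwise triangle inequality $d(gx,gz)\le d(gx,gy)+d(gy,gz)$. Finally, right-invariance of $M$ delivers the isometry of the action: for $h\in G$,
\[D(hx,hy)=M\bigl(g\mapsto d(ghx,ghy)\bigr)=M(\varphi_{x,y}\cdot h^{-1})=M(\varphi_{x,y})=D(x,y).\]

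The only step deserving the label of \emph{main obstacle} is the membership $\varphi_{x,y}\in\C^b_{ru}(G)$; but, as the estimate above shows, it is only mildly technical and parallels the proof of Lemma~\ref{lem:mostGeneralAction}, with the triangle inequality playing the role of the norm bound on $x^*$.
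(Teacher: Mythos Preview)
Your proposal is correct and follows essentially the same approach as the paper's own proof: define $D(x,y)=M(g\mapsto d(gx,gy))$ via a right-invariant mean, verify $\varphi_{x,y}\in\C^b_{ru}(G)$ using the uniform bi-Lipschitz bound and continuity of the action, and then derive the metric axioms, the bi-Lipschitz estimate, and $G$-invariance from positivity, linearity, normalisation and right-invariance of $M$. The organisation and estimates are virtually identical, with only cosmetic differences in how the $\varepsilon$ is split.
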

\begin{proof}By assumption, there are $0<r<R<\infty$ such that \eqref{eq:biLipMetric} holds. Let $M$ be a right-invariant mean on $G$. For any $x,y\in \MM$ set $$D(x,y):=M(g\mapsto d(gx,gy)).$$

In order to see that this is a well-defined map, we need to see the mapping $\varphi(g):=d(gx,gy)$, $g\in G$ is in the domain of $M$. It is indeed a bounded function on $G$ as the action of $G$ is by uniformly bounded bi-Lipschitz maps. Pick $\varepsilon>0$. Using the continuity of the map $h\mapsto hx$ and $h\mapsto hy$ pick a neighborhood $U$ of $e$ with $d(hx,x) + d(hy,y)<r\varepsilon$ for every $h\in U$. Then for every $h\in U$ we have
\[\begin{split}\|\varphi \cdot h^{-1} - \varphi\|_\infty & = \sup_{g\in G} |d(ghx,ghy) - d(gx,gy)|\\ & \leq \sup_{g\in G} d(ghx,gx) + d(ghy,gy)\\ & \leq r^{-1} \sup_{g\in G} d(hx,x) + d(hy,y) < \varepsilon, \end{split}\]
which shows that $\varphi\in\C^b_{ru}(G)$ and $M(\varphi)$ is defined.

We check that $D$ is a metric on $\MM$ bi-Lipschitz to $d$ and that the action of $G$ on $\MM$ is by isometries with respect to $D$. Pick $x,y,z\in\MM$.
\begin{itemize}
    \item If $x\neq y$, then we have $\inf_{g\in G} d(gx,gy)\geq rd(x,y)$, thus we obtain $D(x,y)\geq rd(x,y)>0$.
    \item The symmetry $D(x,y)=D(y,x)$ is obvious.
    \item For every $g\in G$, $d(gx,gz)\leq d(gx,gy)+d(gy,gz)$. Thus for functions $f_1,f_2,f_3\in \C^b_{lu}(G)$ defined by $g\mapsto d(gx,gz)$, $g\mapsto d(gx,gy)$, and $g\mapsto d(gy,gz)$, respectively, we have $M(f_2)+M(f_3)=M(f_2+f_3)\geq M(f_1)$ by the positivity of $M$, so $D(x,z)\leq D(x,y)+D(y,z)$.
    \item Using the positivity of $M$, we have \[r D(x,y)\leq d(x,y)\leq RD(x,y)\] and so the metric $D$ is bi-Lipschitz equivalent to $d$.
    \item Finally, for every $x,y\in\MM$ and $g\in G$ we need to check that $D(x,y)=D(gx,gy)$ which however follows from the right-invariance of the mean.
\end{itemize}
\end{proof}
Thus, from now on, we will assume that $G$ acts by isometries on a metric space $\MM$. Then $G$ induces actions by affine isometries on both $\F(\MM)$ and $\Lip_0(\MM)$. Unless $G$ preserves $0$ in $\MM$, these actions are not linear.

More precisely, for every $g\in G$ the mapping $T'_g:\MM\rightarrow \F(\MM)$ defined by $m\mapsto \delta(gm)-\delta(g0)$ is an isometry preserving $0$, so it extends to a linear isometry $T_g:\F(\MM)\rightarrow \F(\MM)$. Since for every $g,h\in G$, $T_g\circ T_h=T_{gh}$, it follows that each $T_g$ is a linear surjective isometry. Denote also by $A_g:\F(\MM)\rightarrow\F(\MM)$ the affine map $x\to T_g(x)+\delta(g0)$. It is now easy to check that $A_g$ is an affine isometry extending the isometry $g:\MM\rightarrow\MM$. To conclude, the original action of $G$ on $\MM$ provided us with an action by affine isometries on $\F(\MM)$ by the maps $(A_g)_{g\in G}$ and with an action by linear isometries by the maps $(T_g)_{g\in G}$. We denote by $\A_G:G\times \F(\MM)\to \F(\MM)$ the affine action given by $\A(g,\mu) = A_g(\mu)$, $g\in G$, $\mu\in\F(\MM)$. Note that, using the notation from Subsection~\ref{subsec:notationActions}, we have $\pi(\A_G)(g,\mu) = T_g(\mu)$ for every $g\in G$ and $\mu\in\F(\MM)$. In what follows we denote the action $\pi(\A_G)$ by $\T_G$. It is straightforward to check that the action $\A_G$ is norm-continuous. Indeed, since the action $G$ on $\MM$ is continuous and $\delta(\MM)$ is linearly dense in $\F(\MM)$ we obtain that for every $\mu\in\F(\MM)$ the mapping $g\mapsto g\mu$ is continuous, so for every $g\in G$, $\mu\in\F(\MM)$ and $\varepsilon>0$ there exists an open neighborhood $U$ of $g$ such that $\|A_hx-A_gx\|<\tfrac{\varepsilon}{2}$ for every $h\in U$ which, using that the action is by isometries, implies that for every $(h,y)\in U\times B(x,\tfrac{\varepsilon}{2})$ we have 
\[\|A_hy-A_gx\|\leq \|A_hy-A_hx\| + \tfrac{\varepsilon}{2} < \varepsilon,\]
so the action $\A_G$ is continuous.

Notice that we may consider the dual linear action of $G$ on $\Lip_0(\MM)$ by considering $S_g(f)(m):=f(g^{-1}m) - f(g^{-1}0)$, for $f\in\Lip_0(\MM)$ and $m\in\MM$. The action $(S_g)_{g\in G}$ on $\Lip_0(\MM)$ is separately $w^*$-continuous.

\subsection{The space $\MM/G$}

In this subsection we work with a general Hausdorff topological group $G$. Suppose that $G$ acts by isometries on a metric space $\MM$. On the space of orbits consider the pseudometric $D(Gx,Gy):=\inf_{x'\in Gx,y'\in Gy} d(x',y')$. We consider the metric quotient, that is, points of $\MM/G$ are sets $[Gx]:=\{z\in\MM\colon D(Gx,Gz)=0\}=\overline{Gx}$ and the distance is given as \[d_{\MM/G}([Gx],[Gy]):=\inf_{x'\in [Gx],y'\in [Gy]} d(x',y').\] It is straightforward to verify that this coincides with the Hausdorff distance between the closed sets $[Gx]$ and $[Gy]$.

As the distinguished point in $\MM/G$, we choose [$G0]$, the closure of the orbit of $0$.

Given a Banach space $X$, by $\Lip^G_0(\MM,X)\subset \Lip_0(\MM,X)$ we denote the subset of functions which are $G$-invariant, that is, $f(x) = f(gx)$ for every $g\in G$ and $x\in \MM$. If $X=\Rea$, we write simply $\Lip_0^G(\MM)$ instead of $\Lip^G_0(\MM,\Rea)$.

\begin{Lemma}\label{lem:G-invLip}
Let $G$ be a group acting on $(\MM,d)$ by isometries and let $X$ be a Banach space. Then the mapping $\Psi: \Lip_0(\MM/G,X)\to \Lip^G_0(\MM,X)$ defined as \[\Psi(f)(x):=f([Gx]),\qquad f\in \Lip_0(\MM/G,X),\; x\in \MM\]
is a linear surjective isometry.
\end{Lemma}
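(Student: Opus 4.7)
The plan is to verify the four routine properties (well-definedness of $\Psi$, linearity, isometric property, and surjectivity) by directly unwinding the definitions, where the only nontrivial input is that any $G$-invariant Lipschitz function on $\MM$ is automatically constant on each closed orbit $\overline{Gx}$ because $G$ acts by isometries.

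First I would check that $\Psi$ maps into $\Lip_0^G(\MM,X)$. Clearly $\Psi(f)(0) = f([G0]) = 0$ and $\Psi(f)(gx) = f([Ggx]) = f([Gx]) = \Psi(f)(x)$. For the Lipschitz estimate, since $d_{\MM/G}([Gx],[Gy]) \leq d(x,y)$ by definition, we get $\Lip(\Psi(f)) \leq \Lip(f)$. Linearity is immediate.

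For the reverse inequality, given $[Gx],[Gy]\in\MM/G$ and $\varepsilon>0$, I would pick $x'\in\overline{Gx}$, $y'\in\overline{Gy}$ with $d(x',y')<d_{\MM/G}([Gx],[Gy])+\varepsilon$. The small point to verify here is that $[Gx']=[Gx]$ and $[Gy']=[Gy]$: if $g_n x\to x'$ then, using that $G$ acts by isometries, $g_n^{-1} x'\to x$ (since $\|g_n^{-1}x'-x\|=\|x'-g_n x\|\to 0$), so $x\in\overline{Gx'}$ and hence $\overline{Gx'}=\overline{Gx}$. Then $\|f([Gx])-f([Gy])\|=\|f([Gx'])-f([Gy'])\|=\|\Psi(f)(x')-\Psi(f)(y')\|\leq\Lip(\Psi(f))\,d(x',y')$, and letting $\varepsilon\to 0$ gives $\Lip(f)\leq\Lip(\Psi(f))$, so $\Psi$ is an isometry (and in particular injective).

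For surjectivity, given $h\in\Lip_0^G(\MM,X)$ I would define $\tilde h([Gx]):=h(x)$. To see this is well-defined, note that if $[Gx]=[Gy]$ then $y\in\overline{Gx}$, so there is a net (or sequence, by separability of $d(\cdot,y)$ on the orbit, but nets suffice in general) $g_n x\to y$; by $G$-invariance $h(g_n x)=h(x)$ and by continuity of $h$ we get $h(y)=h(x)$. Exactly the same estimate as above, using a near-optimal pair $(x',y')$ realizing the Hausdorff distance, shows $\Lip(\tilde h)\leq\Lip(h)$, and evidently $\tilde h([G0])=h(0)=0$ and $\Psi(\tilde h)=h$. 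The only real subtlety is the observation that isometric group actions make $G$-invariant Lipschitz functions extend continuously to the closure of orbits and identify the orbit-closures in both directions; everything else is a direct computation.
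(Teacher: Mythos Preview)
Your proof is correct and follows essentially the same approach as the paper's: both verify that $\Psi$ is a contraction, construct the inverse map $f\mapsto ([Gx]\mapsto f(x))$, and use a near-optimal pair of points to show the reverse Lipschitz inequality. The only minor differences are organizational---the paper packages things as $\Psi\circ S=\Id$ and $S\circ\Psi=\Id$ rather than proving $\Lip(f)\leq\Lip(\Psi(f))$ and surjectivity separately---and the paper picks the near-optimal points $gx,hy$ directly in the orbits (avoiding your verification that $[Gx']=[Gx]$ for $x'\in\overline{Gx}$, which is fine but unnecessary since $Gx$ is dense in $[Gx]$). One small slip: $\MM$ is a general metric space, so write $d(g_n^{-1}x',x)$ rather than $\|g_n^{-1}x'-x\|$.
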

\begin{proof}
For every $f\in \Lip_0(\MM/G,X)$ and two distinct points $x,y\in\MM$ we have 
\[
\|f([Gx])-f([Gy])\| \leq \Lip(f)d_{\MM/G}([Gx],[Gy])\leq \Lip(f) d(x,y),
\]
so it is easy to see that the mapping $\Psi$ is a norm-one linear operator.

Pick $f\in \Lip^G_0(\MM,X)$. Consider the mapping $Sf :\MM/G\to X$ given as $S f([Gx]):=f(x)$. Then given two distinct points $[Gx],[Gy]\in\MM$, for every $\varepsilon>0$ we pick $g,h\in G$ such that $d_{\MM/G}([Gx],[Gy]) > (1+\varepsilon)^{-1}d(gx,hy)$ and we obtain
\[\begin{split}
     \|Sf([Gx]])-Sf([Gy]])\| & = \|f(x)-f(y)\|\leq \Lip(f)d_{\MM}(gx,hy)\\ & \leq (1+\varepsilon)d_{\MM/G}([Gx],[Gy])\Lip(f),
\end{split}\]
 so $Sf$ is well defined and $S:\Lip^G_0(\MM,X)\to \Lip_0(\MM/G,X)$ given by $S(f):=Sf$ is a norm-one linear operator.
 
Finally, it is easy to check that $\Psi\circ S = \Id$ and $S \circ \Psi = \Id$, so $\Psi$ is a linear isometry with $\Psi^{-1} = S$.
\end{proof}

We conclude this subsection by isometric characterization of the space $\F(\MM/G)$.

\begin{Lemma}\label{lem:identifyMetricQuotient}
Let $\MM$ be a pointed metric space and let $G$ be a group acting on $\MM$ by isometries. Put $Y_G:=\cospn \{\delta(gx)-\delta(x)\colon g\in G, x\in\MM\}$. Then there is surjective linear isometry $\Lambda:\F(\MM)/Y_G\to\F(\MM/G)$ satisfying
\[\Lambda\Big([\delta(x)]\Big):=\delta([Gx]),\qquad x\in\MM.\]
Moreover, $Y_G = \Lip_0^G(\MM)_\perp$ and if $\Psi$ is the mapping from Lemma~\ref{lem:G-invLip} (where $X=\Rea$) then $\Lambda^*=\Psi$.
\end{Lemma}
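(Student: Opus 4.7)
The plan is to build $\Lambda$ from the universal property of $\F(\MM)$ and then verify it is a surjective isometry by passing to the dual. First, the natural quotient map $q:\MM\to\MM/G$ sending $x$ to $[Gx]$ is $1$-Lipschitz (since $d_{\MM/G}([Gx],[Gy])\leq d(x,y)$) and sends the base point $0$ to $[G0]$, so Theorem~\ref{thm:universalProperty} applied to $\delta_{\MM/G}\circ q$ produces a unique linear contraction $\widehat q:\F(\MM)\to\F(\MM/G)$ with $\widehat q(\delta(x))=\delta([Gx])$. Since $\widehat q(\delta(gx)-\delta(x))=\delta([Gx])-\delta([Gx])=0$ for all $g\in G,x\in\MM$, we have $Y_G\subseteq\ker\widehat q$, so $\widehat q$ factors through the quotient $\F(\MM)/Y_G$ and yields the desired linear operator $\Lambda$ with $\Lambda([\delta(x)])=\delta([Gx])$ and $\|\Lambda\|\leq 1$.

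To upgrade contraction to isometry (and simultaneously obtain surjectivity), we would dualize. Under the canonical identification $(\F(\MM)/Y_G)^{*}\equiv Y_G^\perp\subseteq\Lip_0(\MM)$, a direct unwrapping of the definitions shows $Y_G^\perp=\Lip_0^G(\MM)$: a function $f\in\Lip_0(\MM)$ annihilates every generator $\delta(gx)-\delta(x)$ precisely when $f(gx)=f(x)$ for every $g\in G$ and $x\in\MM$. Evaluating on $\delta(x)$ then gives, for every $f\in\Lip_0(\MM/G)$,
\[
\Lambda^{*}(f)(\delta(x))=f(\Lambda([\delta(x)]))=f([Gx])=\Psi(f)(x),
\]
and by the linear denseness of $\delta(\MM)$ in $\F(\MM)$ this forces $\Lambda^{*}=\Psi$ as maps from $\Lip_0(\MM/G)$ into $Y_G^\perp=\Lip_0^G(\MM)$.

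By Lemma~\ref{lem:G-invLip}, $\Psi$ is a surjective linear isometry, so $\Lambda^{*}$ is a surjective isometry; by standard duality $\Lambda$ is itself a surjective linear isometry. Finally, the identity $Y_G=\Lip_0^G(\MM)_\perp$ follows from the Hahn--Banach theorem: the subspace $Y_G$ is norm-closed by definition and we have already shown $Y_G^\perp=\Lip_0^G(\MM)$, hence $Y_G=(Y_G^\perp)_\perp=\Lip_0^G(\MM)_\perp$. No real obstacle arises; the only point requiring care is keeping the canonical identifications between $(\F(\MM)/Y_G)^{*}$, $Y_G^\perp\subseteq\Lip_0(\MM)$, and $\Lip_0^G(\MM)$ straight while verifying $\Lambda^{*}=\Psi$.
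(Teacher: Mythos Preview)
Your proof is correct. Both your argument and the paper's hinge on the duality $\Lambda^*=\Psi$, but you organize it in the reverse order: you build $\Lambda$ directly from the universal property of $\F(\MM)$ as the quotient of the canonical contraction $\widehat q$, then compute $\Lambda^*$ and identify it with the known isometry $\Psi$ from Lemma~\ref{lem:G-invLip}, and finally invoke the standard fact that an operator whose adjoint is a surjective isometry is itself one. The paper instead works dual-side first: it uses the Banach--Dieudonn\'e theorem to verify that $\Psi$ is a $w^*$-$w^*$ homeomorphism onto the $w^*$-closed subspace $\Lip_0^G(\MM)$, composes with the canonical identification $M\cong (X/M_\perp)^*$ for $w^*$-closed $M\subset X^*$ to obtain an isometric $w^*$-$w^*$ isomorphism $\Theta:\Lip_0(\MM/G)\to(\F(\MM)/Y_G)^*$, and only then takes the predual to produce $\Lambda=\Theta^*|_{\F(\MM)/Y_G}$. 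Your route is slightly more elementary in that it avoids the Banach--Dieudonn\'e step and any explicit $w^*$-continuity check; the paper's route, on the other hand, makes the $w^*$-$w^*$ continuity of $\Psi=\Lambda^*$ explicit, which is occasionally useful downstream.
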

\begin{proof}Let $\Psi$ be the mapping from Lemma~\ref{lem:G-invLip} for $X=\Rea$. Using the Banach-Diedonn\'e Theorem, see e.g. \cite{FHHMZ}*{Corollary 3.94}, and the fact that on bounded subsets of $\Lip_0(\MM)$ the $w^*$-convergence coincides with the pointwies convergence, it is easy to see that $\Psi$ is $w^*$-$w^*$ homeomorphism as a mapping from $\Lip_0(\MM/G)$ into $\Lip_0(\MM)$ and since by the definition we have
\[\Lip_0^G(\MM) = \{\delta(gx)-\delta(x)\colon g\in G, x\in\MM\}^{\perp},\]
its image is $w^*$-closed and moreover by the bipolar theorem we obtain $\Lip_0^G(\MM)_\perp = Y_G$.

It is well-known that whenever $M\subset X^*$ is $w^*$-closed subspace, then the mapping $I:M\to (X/_{M_\perp})^*$ defined by $I(f)([x]):=f(x)$, $f\in M$, $x\in X$ is $w^*$-$w^*$ homeomorphism and surjective linear isometry.

Thus, the mapping \[\Theta:\Lip_0(\MM/G)\to \Big(\F(\MM)/Y_G\Big)^*\] defined by
    \[\Theta(f)\big([\delta(x)]\big) = f\Big(\delta\big([Gx]\big)\Big),\qquad x\in\MM\]
    is surjective linear isometry and $w^*$-$w^*$ homeomorphism. Thus, the mapping $\Theta^*|_{\F(\MM)/Y_G}$ is isometry between $\F(\MM)/Y_G$ and $\F(\MM/G)$. Finally, it is easy to observe that we have $\Lambda = \Theta^*|_{\F(\MM)/Y_G}$ and $\Psi = \Lambda^*$.
\end{proof}

\subsection{Projections induced by group actions with bounded orbits}\label{subsec:mainSubsection} We start with a preliminary observation.

\begin{Lemma}\label{lem:maybeRedundant}
Let $\MM$ be a metric space, $G$ be an amenable group acting by affine isometries on $\MM$ which is either locally compact or SIN. If $M$ is right-invariant mean on $G$ and $R_G$, $I_G$ are as in Lemma~\ref{lem:amenableOperator} (applied to the action $\A_G$), then
    \[\forall x\in\MM\; \forall f\in\Lip_0^G(\MM):\qquad R_G(\delta(x))(f) = f(x).\]
\end{Lemma}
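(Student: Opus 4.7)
The plan is to unfold the definition of $R_G$ and use that $f$ is $G$-invariant to collapse the integrand to a constant. First I would recall from Subsection~\ref{subsec:prelim} that the linear part $\pi(\A_G)(g) = T_g$ of the affine action $\A_G$ on $\F(\MM)$ acts on generators by $T_g\delta(x) = \delta(gx) - \delta(g0)$, and that, under the identification $\Lip_0(\MM) = \F(\MM)^*$, the duality pairing sends $(f, \delta(y))$ to $f(y)$.

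By definition of $R_G$ in Lemma~\ref{lem:amenableOperator} applied to the action $\A_G$ on $X = \F(\MM)$,
\[
R_G(\delta(x))(f) \;=\; M\bigl(g \mapsto f(T_g\delta(x))\bigr) \;=\; M\bigl(g \mapsto f(gx) - f(g0)\bigr).
\]
Since $f \in \Lip_0^G(\MM)$, we have $f(gx) = f(x)$ for every $g \in G$, and in particular $f(g0) = f(0) = 0$. Hence the function $g \mapsto f(T_g\delta(x))$ is identically equal to $f(x)$ on $G$, and the conclusion follows from $M(1_G) = 1$.

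The only non-cosmetic point to check is that the function $g \mapsto f(T_g\delta(x))$ actually lies in the domain of $M$, but this is exactly what Lemma~\ref{lem:mostGeneralAction} provides when applied to the linear isometric action $(T_g)_{g\in G}$ on $\F(\MM)$ with vector $\delta(x)$ and functional $f$: the function lies in $\C^b_{ru}(G)$ in general, in $L_\infty(G)$ if $G$ is locally compact, and in $\C^b_u(G)$ if $G$ is SIN, which in either of the two hypotheses of the lemma is enough for a right-invariant (a fortiori bi-invariant) mean to evaluate on it. There is no real obstacle in this proof; the lemma is essentially an unwinding of definitions, which is likely why it is named \texttt{maybeRedundant}.
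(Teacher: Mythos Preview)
Your proof is correct and follows essentially the same approach as the paper: compute $f(T_g\delta(x)) = f(gx) - f(g0) = f(x)$ using $G$-invariance of $f$, then apply $M$ to the constant function. The only difference is that you spell out the well-definedness check via Lemma~\ref{lem:mostGeneralAction}, which the paper leaves implicit in the definition of $R_G$.
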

\begin{proof}
Pick $x\in\MM$ and $f\in\Lip_0^G(\MM)$. For every $g\in G$ we have \[f\big(T_g\delta(x)\big) = f(gx)-f(g0) = f(x),\]
which implies that
\[R_G(\delta(x))(f) = M\big(g\mapsto f\big(T_g\delta(x)\big)\big) = M\big(g\mapsto f(x)\big) = f(x).\]
\end{proof}

The following identifies ranges of mappings $R_G$ and $\widetilde{P_G}$ defined in Lemma~\ref{lem:amenableOperator} in the case when the group $G$ has bounded orbits and $X = \F(\MM)$.
\begin{Theorem}\label{thm:freeSpaceAmenableOperator}
Let $G$ be an amenable group acting continuously on a metric space $\MM$ by isometries with bounded orbits. Set $X=\F(\MM)$. Assume that $G$ is locally compact or SIN.

If $R_G$, $\widetilde{P_G}$ are the mappings from Lemma~\ref{lem:amenableOperator} (applied to the action $\A_G$), then $R_G[X]\subset X^{**}$ is a closed subspace linearly isometric to $\F(\MM/G)$,  $\widetilde{P_G}[X^*] = \Lip_0^G(\MM)$ and $\ker R_G = \cospn\{\delta(gx)-\delta(x)\colon x\in\MM,g\in G\}$.

In particular the following holds.
\begin{enumerate}
    \item $\Lip_0(\MM/G)$ is isometric to a $1$-complemented subspace of $\Lip_0(\MM)$.
    \item\label{it:freeWorse} If $\Lip_0^G(\MM)\cap (I_G)^\perp = \{0\}$, then $I_G$ is $1$-complemented in $\F(\MM)$.
\end{enumerate}
\end{Theorem}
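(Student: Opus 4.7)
My plan is to make the abstract content of Lemma~\ref{lem:amenableOperator} concrete in the Lipschitz-free setting by computing $\widetilde{I_G}$, $\ker R_G$ and $R_G[X]$, and then read off the announced conclusions.

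The central identification is $\widetilde{I_G} = \Lip_0^G(\MM)$. Unfolding the dual action, $\pi(g)f = f$ for every $g\in G$ becomes $f(hm) - f(h0) = f(m)$ for all $h\in G$ and $m\in\MM$ (substitute $h = g^{-1}$). Taking $m = h_2 0$ and $h = h_1$ shows that $\psi(h):=f(h0)$ is a group homomorphism from $G$ to $\Rea$. This is the only point where the bounded-orbit hypothesis plays a non-formal role: the orbit $\{h0\}_{h\in G}$ is bounded and $f$ is Lipschitz, so $\psi$ is a bounded real homomorphism, hence identically zero; then $f(hm) = f(m)$, i.e.\ $f\in\Lip_0^G(\MM)$. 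The reverse containment is immediate, so by Lemma~\ref{lem:amenableOperator}(4) we get $\widetilde{P_G}[X^*] = \Lip_0^G(\MM)$.

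Now everything else is formal. Lemma~\ref{lem:amenableOperator}(5) together with the identity $Y_G = \Lip_0^G(\MM)_\perp$ from Lemma~\ref{lem:identifyMetricQuotient} gives $\ker R_G = Y_G$. For the isometric identification of $R_G[X]$, the adjoint identity $R_G(\mu)(f) = \widetilde{P_G}(f)(\mu)$ combined with $\widetilde{P_G}$ being a norm-one projection onto $\Lip_0^G(\MM)$ yields
\[\|R_G\mu\|_{X^{**}} = \sup\{|f(\mu)| : f\in\Lip_0^G(\MM),\ \|f\|\leq 1\} = \|[\mu]\|_{X/Y_G},\]
using $Y_G^\perp = \Lip_0^G(\MM)$. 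Thus $R_G$ descends to an isometric embedding $X/Y_G\hookrightarrow X^{**}$ with closed range, and Lemma~\ref{lem:identifyMetricQuotient} provides the isometry $X/Y_G\cong\F(\MM/G)$.

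Statement (1) follows directly by composing the isometry $\Psi$ of Lemma~\ref{lem:G-invLip} with the norm-one projection $\widetilde{P_G}$ onto $\Lip_0^G(\MM)$. For (2), the restriction to $I_G$ of the quotient map $q:X\to X/Y_G$ is an isometry (because $R_G$ fixes $I_G$, so $\|q(a)\| = \|R_G a\| = \|a\|$), making $q(I_G)$ closed in $X/Y_G$. Under the duality $(X/Y_G)^* = Y_G^\perp = \Lip_0^G(\MM)$, functionals annihilating $q(I_G)$ are precisely those in $\Lip_0^G(\MM)\cap I_G^\perp$, which is $\{0\}$ by hypothesis; hence $q(I_G)$ is also dense, forcing $q(I_G) = X/Y_G$ and so $X = I_G + Y_G$. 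Together with $I_G\cap Y_G = \ker R_G\cap I_G = \{0\}$ from Lemma~\ref{lem:amenableOperator}(3), this gives a direct sum $X = I_G\oplus Y_G$ with projection onto $I_G$ of norm $\leq 1$. I expect the main obstacle to be the first step: deducing $\widetilde{I_G} = \Lip_0^G(\MM)$ requires careful bookkeeping of the affine-vs-linear dual action and the homomorphism trick that exploits boundedness of orbits in a clean way; everything afterwards is essentially duality.
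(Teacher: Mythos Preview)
Your proof is correct and takes a genuinely different route from the paper's in two places.

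For the identification $\widetilde{I_G}=\Lip_0^G(\MM)$, the paper does not use your homomorphism trick. It instead invokes the ``Moreover'' part of Lemma~\ref{lem:amenableOperator}, which (via right-invariance of the mean) gives $\widetilde{P_G}(f)(gm)=\widetilde{P_G}(f)(m)$ directly, so the range of $\widetilde{P_G}$ lands in $\Lip_0^G(\MM)$ without ever analysing $f(h0)$. Your argument is a pleasant self-contained alternative: it isolates exactly where bounded orbits enter (forcing the bounded homomorphism $h\mapsto f(h0)$ to vanish) and does not appeal to the mean at all at this step.

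For the isometric identification of $R_G[X]$ with $\F(\MM/G)$, the paper works constructively: it defines $T'\colon\MM/G\to X^{**}$ by $T'([Gx])=R_G(\delta(x))$, extends to $T\colon\F(\MM/G)\to X^{**}$, shows $R_G=T\circ A$ (with $A$ the canonical surjection $\F(\MM)\to\F(\MM/G)$) to get closedness of the range, and then proves $T$ is isometric using Lemma~\ref{lem:maybeRedundant}. You bypass all of this by the single duality computation $\|R_G\mu\|=\sup\{|f(\mu)|:f\in\Lip_0^G(\MM),\ \|f\|\le1\}=\|[\mu]\|_{X/Y_G}$, which simultaneously gives closedness and the isometry via Lemma~\ref{lem:identifyMetricQuotient}. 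This is shorter and avoids Lemma~\ref{lem:maybeRedundant} entirely; the price is that the explicit description of the isometry $\F(\MM/G)\to R_G[X]$ as $\delta([Gx])\mapsto R_G(\delta(x))$ is implicit rather than displayed, though it is easily recovered.

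Your argument for (2) is essentially the paper's rephrased through the quotient: both amount to showing $\ker R_G+I_G=X$ from the annihilator condition, and then invoking Lemma~\ref{lem:amenableOperator}\eqref{it:projection}. Your version, arguing that $q|_{I_G}$ is an isometry so $q(I_G)$ is closed and then dense, makes the closedness step more transparent than the paper's one-line passage from $(\ker R_G)^\perp\cap(I_G)^\perp=\{0\}$ to $\ker R_G+I_G=\F(\MM)$.
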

\begin{proof}
We define a map $T':(\MM/G,d_{\MM/G})\rightarrow R_G[X]$ as follows. For $[Gx]\in\MM/G$ we set \[T'([Gx]):=R_G(\delta(x)).\]
We check that $T'$ is a well-defined $1$-Lipschitz map. Notice that by the ``Moreover'' part of Lemma~\ref{lem:amenableOperator} we have $R_G(\delta(x))=R_G(A_g\delta(x)) = R_G(\delta(gx))$ for all $g\in G$ and $x\in\MM$, so $R_G(\delta(x')) = R_G(\delta(x))$ for every $x'\in Gx$. 
Now, consider two points $[Gx], [Gy]\in \MM/G$, some $\varepsilon>0$, and pick $x'\in [Gx]$, $y'\in [Gy]$ such that $d(x',y')<d_{\MM/G}([Gx],[Gy])+\varepsilon$. Also, choose $x''\in Gx$, resp. $y''\in Gy$ satisfying $d(x'',x')<\varepsilon$, resp. $d(y'',y')<\varepsilon$. Thus we have 
\[\begin{split}
    \|T'([Gx])-& T'([Gy])\| =\|R_G(\delta(x''))-R_G(\delta(y''))\| \\ & = \|R_G(\delta(x''))-R_G(\delta(x')) + R_G(\delta(x')) - R_G(\delta(y')) + \\ & \qquad  R_G(\delta(y'))-R_G(\delta(y''))\|\\
& \leq d(x'',x') + d(x',y')+d(y',y'') \leq d_{\MM/G}([Gx],[Gy])+3\varepsilon.
\end{split}\]
Since $\varepsilon>0$ was arbitrary, we get that $T'$ is well-defined and $1$-Lipschitz. It follows that there is a linear extension $T:\F(\MM/G)\rightarrow X^{**}$ which is a norm-one linear surjection onto $\overline{R_G[X]}$. 

Now, we \emph{claim} that $R_G$ has closed range. Let us denote by $A':\MM\to \MM/G$ the surjection given by $A'(x) = [Gx]$, $x\in\MM$. Since $A'$ is $1$-Lipschitz, it extends to a linear surjection $A:\F(\MM)\to \F(\MM/G)$ and then we obviously have $R_G = T\circ A$, so $R_G[X] = T[\F(\MM/G)] = \overline{R_G[X]}$ which proves the claim.

In order to prove that $T$ is actually an isometry, pick some $z=\sum_{i=1}^n \alpha_i \delta([Gx_i])\in\F(\MM/G)$ and some $1$-Lipschitz $f\in \Lip_0(\MM/G)$ such that $\|z\|=|\sum_{i=1}^n \alpha_i f([Gx_i])|$. By Lemma~\ref{lem:G-invLip}, the mapping $\Psi(f)\in\Lip^G_0(\MM)\subseteq \Lip_0(\MM) = X^*$ is $1$-Lipschitz and therefore, using Lemma~\ref{lem:maybeRedundant} we obtain
\[\begin{split}\|T(z)\| & \geq \Big|\sum_{i=1}^n \alpha_i \big(R_G(\delta(x_i))\big)(\Psi(f))\Big|\\ & =\Big|\sum_{i=1}^n \alpha_i \Psi(f)(x_i)\Big| = \Big|\sum_{i=1}^n \alpha_i f([Gx_i])\Big|=\|z\|,
\end{split}\]
so the restriction of $T$ to a dense subset of $\F(\MM/G)$ is an isometry which implies that $T$ is isometry.

Finally, by Lemma~\ref{lem:amenableOperator} we have that $\widetilde{P_G}$ is a norm-one projection onto the space \[\widetilde{I_G} = \{f\in\Lip_0(\MM)\colon f(g^{-1}m)-f(g^{-1}0) = f(m) \text{ for every }m\in \MM\}.\]
Obviously, we have $\Lip_0^G(\MM)\subset I_G$. Conversely, by the ``Moreover'' part of Lemma~\ref{lem:amenableOperator} we have $\widetilde{P_G}(f)(m) = \widetilde{P_G}(f)(gm)$ for every $f\in\Lip_0(\MM)$, $m\in \MM$ and $g\in G$, so we obtain $\widetilde{I_G}\subset \Lip_0^G(\MM)$. Thus, $\widetilde{P_G}$ is a projection onto $\Lip^G_0(\MM)$. Using Lemma~\ref{lem:amenableOperator} \eqref{it:kernel}, we obtain
\[\begin{split}\ker R_G & = \Lip^G_0(\MM)_\perp = (\{\delta(gx)-\delta(x)\colon x\in\MM,g\in G\}^\perp)_\perp\\ & = \cospn \{\delta(gx)-\delta(x)\colon x\in\MM,g\in G\}.\end{split}\]

Finally, for the ``In particular'' part we use Lemma~\ref{lem:G-invLip}, Lemma~\ref{lem:identifyMetricQuotient} and the fact that \eqref{it:freeWorse} implies that \[\begin{split}(\ker R_G)^\perp\cap (I_G)^\perp & = \{\delta(gx)-\delta(x)\colon x\in\MM,g\in G\}^\perp \cap (I_G)^\perp\\ & = \Lip_0^G(\MM) \cap (I_G)^\perp = \{0\},\end{split}\]
so $\ker R_G + I_G = \F(\MM)$ and we may use Lemma~\ref{lem:amenableOperator} \eqref{it:projection}.
\end{proof}

As an immediate corollary we obtain an optimal result for compact groups.

\begin{Corollary}\label{cor:mainCpctGroup}
Let $G$ be a compact group acting continuously by isometries on a metric space $\MM$. Then $R_G[\F(\MM)]=I_G$ and it is linearly isometric to $\F(\MM/G)$.

In particular, $\F(\MM/G)$ is isometric to a $1$-complemented subspace of $\F(\MM)$.
\end{Corollary}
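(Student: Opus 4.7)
The plan is to observe that a compact group satisfies all the hypotheses of Theorem~\ref{thm:freeSpaceAmenableOperator}, namely it is amenable, locally compact (and in fact SIN), and acts with bounded orbits (since $G$ itself is compact and the action is continuous, $Gx$ is a continuous image of a compact space, hence bounded). Therefore Theorem~\ref{thm:freeSpaceAmenableOperator} applies with $X=\F(\MM)$, and gives us that $R_G[\F(\MM)]$, viewed as a subspace of $\F(\MM)^{**}$, is linearly isometric to $\F(\MM/G)$.

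Next, I would invoke the additional information available for compact groups, namely part \eqref{it:cpctGroup} of the "Moreover" clause in Lemma~\ref{lem:amenableOperator}. Since $G$ is compact, the bi-invariant mean $M$ is $w^*$-continuous (given by integration against the Haar measure), and so $R_G$ actually takes values in $X=\F(\MM)$ rather than merely $X^{**}$; furthermore $R_G:X\to X$ is a norm-one projection onto $I_G$. In particular, $R_G[\F(\MM)] = I_G$ as subsets of $\F(\MM)$.

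Combining the two preceding observations, $I_G = R_G[\F(\MM)]$ is a closed subspace of $\F(\MM)$ which is linearly isometric to $\F(\MM/G)$, and $R_G$ itself is a norm-one projection of $\F(\MM)$ onto it. This proves both assertions simultaneously; no further work is required, since everything reduces to applying the earlier results. There is essentially no obstacle here: the corollary is purely a specialization of Theorem~\ref{thm:freeSpaceAmenableOperator} using the fact that for compact $G$ the operator $R_G$ lands in $X$ itself, so the isometric copy of $\F(\MM/G)$ that a priori sits in the bidual actually sits inside $\F(\MM)$ and is complemented there by $R_G$.
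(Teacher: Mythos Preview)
Your proposal is correct and follows essentially the same approach as the paper: apply Theorem~\ref{thm:freeSpaceAmenableOperator} to get that $R_G[\F(\MM)]$ is isometric to $\F(\MM/G)$, and then use part~\eqref{it:cpctGroup} of Lemma~\ref{lem:amenableOperator} to conclude that $R_G$ is a norm-one projection onto $I_G=R_G[\F(\MM)]\subset\F(\MM)$. The only addition you make is the (welcome) explicit verification that compact groups satisfy the hypotheses of Theorem~\ref{thm:freeSpaceAmenableOperator}.
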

\begin{proof}
This follows from Theorem~\ref{thm:freeSpaceAmenableOperator} and from (b) in Lemma~\ref{lem:amenableOperator} by which $I_G = R_G[X]$ and $R_G$ is a projection.
\end{proof}

Let us now consider the situation when the group $G$ is not compact but at least we know $\F(M)$ is complemented in its bidual via a projection $Q$. In this case, by Theorem~\ref{thm:amenableLinearOperator} there is a natural condition on the projection $Q$ (implied by another conditions, see Lemma~\ref{lem:dualSpacesWorse} and Lemma~\ref{lem:lembed}) which implies that $I_G$ is complemented in $R_G[X]$ which in turn, by Theorem~\ref{thm:freeSpaceAmenableOperator}, is isometric to $\F(\MM/G)$. Hence, there are quite many situations when an application of our results gives that $I_G$ is isomorphic to a complemented subspace of $\F(\MM/G)$ for which by Lemma~\ref{lem:identifyMetricQuotient} we have $\F(\MM/G)\equiv \F(\MM)/Y_G$.

In what follows we find a sufficient condition under which there is actually an isomorphism between $I_G$ and $\F(\MM/G)$.

\begin{Theorem}\label{thm:complInBidualVariant}
Let $G$ be an amenable group acting continuously on a metric space $\MM$ by isometries with bounded orbits. Assume that $G$ is locally compact or SIN. Set $X=\F(\MM)$ and assume that $X$ is complemented in its bidual via a projection $Q$ satisfying $\T_G(G)\cdot \ker Q\subset \ker Q$, that is, for every $g\in G$ we have $T_g(\ker Q)\subset \ker Q$. Let $P_G:X\to X$ be the projection onto $I_G$ from the statement of Theorem~\ref{thm:amenableLinearOperator} (applied to the action $\A_G$) and $\Psi:\Lip_0(\MM/G)\to \Lip_0^G(\MM)$ the mapping from Lemma~\ref{lem:G-invLip}.

If $Y\subset \Lip_0(\MM/G)$ separates the points of $\F(\MM/G)$ and $\ker P_G\subset \Psi(Y)_\perp$, then $\ker P_G = \Lip_0^G(\MM)_\perp$ and $P_G[X]$ is $\|Q\|$-isomorphic to $\F(\MM/G)$.

In particular, we have \[\F(\MM/G)\simeq P_G[X]\stackrel{C}{\hookrightarrow}\F(\MM).\]
\end{Theorem}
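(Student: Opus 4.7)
The whole argument reduces to the identity $\ker P_G=\ker R_G$; once that is in hand, everything else is mechanical. First I would unpack the structure of $R_G$ using Theorem~\ref{thm:freeSpaceAmenableOperator} (and its proof): $R_G$ factors as $R_G=T\circ A$, where $A\colon\F(\MM)\to\F(\MM/G)$ is the canonical quotient (the unique linear extension of $\delta(x)\mapsto\delta([Gx])$) and $T\colon\F(\MM/G)\to R_G[X]\subset X^{**}$ is a surjective linear isometry. In particular
\[
\ker R_G=\ker A=Y_G=\Lip_0^G(\MM)_\perp.
\]
Since $P_G=Q\circ R_G$, the inclusion $\ker R_G\subset\ker P_G$ is immediate.

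Next I would show that $\Psi$ is essentially the adjoint of $A$: for $f\in\Lip_0(\MM/G)$ and $x\in\MM$ we have $\Psi(f)(x)=f([Gx])=f(A\delta(x))$, so by linearity and density of $\delta(\MM)$ in $\F(\MM)$,
\[
\Psi(f)(\mu)=f(A\mu)\qquad\text{for every }\mu\in\F(\MM),\; f\in\Lip_0(\MM/G).
\]
This is the bridge connecting the hypothesis $\ker P_G\subset\Psi(Y)_\perp$ to the quotient $A$.

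Now I would prove the reverse inclusion $\ker P_G\subset\ker R_G$. Fix $\mu\in\ker P_G$. By hypothesis $\Psi(f)(\mu)=0$ for every $f\in Y$, equivalently $f(A\mu)=0$ for every $f\in Y$. Since $Y\subset\Lip_0(\MM/G)=\F(\MM/G)^*$ separates points of $\F(\MM/G)$, this forces $A\mu=0$, hence $\mu\in\ker A=\ker R_G$. Thus $\ker P_G=\ker R_G=Y_G$.

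Having identified $\ker P_G$, the rest is formal. By Lemma~\ref{lem:identifyMetricQuotient}, $X/Y_G$ is linearly isometric to $\F(\MM/G)$. Since $P_G$ is a projection on $X$ with $\|P_G\|\leq\|Q\|$ (Theorem~\ref{thm:amenableLinearOperator}) and $\ker P_G=Y_G$, the quotient map restricted to $P_G[X]$ is a linear bijection onto $X/\ker P_G$ of norm $\leq 1$ whose inverse has norm $\leq\|P_G\|\leq\|Q\|$; composing with the isometry $X/Y_G\equiv\F(\MM/G)$ gives the desired $\|Q\|$-isomorphism $P_G[X]\simeq\F(\MM/G)$. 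The ``in particular'' statement is immediate because $P_G\colon\F(\MM)\to\F(\MM)$ is itself a projection, so its range is automatically complemented. The main conceptual step is the middle one: recognizing that the slightly awkward hypothesis ``$\ker P_G\subset\Psi(Y)_\perp$'' is precisely what converts point-separation of $Y$ on $\F(\MM/G)$ into the inclusion $\ker P_G\subset\ker A$.
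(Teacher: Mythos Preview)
Your proof is correct and uses essentially the same idea as the paper: both arguments hinge on combining the hypothesis $\ker P_G\subset\Psi(Y)_\perp$ with the point-separation of $Y$ to force $\ker P_G=\ker R_G=\Lip_0^G(\MM)_\perp$. The only difference is organizational: you identify the kernel first and then invoke the quotient isometry $\F(\MM)/Y_G\equiv\F(\MM/G)$ from Lemma~\ref{lem:identifyMetricQuotient}, whereas the paper constructs explicit maps $T\colon\F(\MM/G)\to\F(\MM)$ (sending $\delta([Gx])$ to $P_G(\delta(x))$) and $S=A$ in your notation, and verifies $S\circ T=\Id$ on Dirac masses---but unwinding that computation is exactly your step~5.
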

\begin{proof}We define a map $T':(\MM/G,d_{\MM/G})\rightarrow \F(\MM)$ as follows. For $[Gx]\in\MM/G$ we set \[T'([Gx]):=P_G(\delta(x)).\]
Similarly as in the proof of Theorem~\ref{thm:freeSpaceAmenableOperator}, using that by the ``Moreover'' part of Theorem~\ref{thm:amenableLinearOperator} we have $P_G(\delta(x))=P_G(A_g\delta(x)) = P_G(\delta(gx))$ for all $g\in G$ and $x\in\MM$, we check that $T'$ is a well-defined $1$-Lipschitz map.

Thus, there is a linear extension $T:\F(\MM/G)\rightarrow \F(\MM)$ which is a norm-one linear surjection onto $P_G[X] = I_G$ and we have $T(\delta([Gx])) = P_G(\delta(x))$ for every $x\in \MM$.

Similarly, there is a linear mapping $S:\F(\MM)\to\F(\MM/G)$ with $\|S\|\leq 1$ and $S(\delta(x)) = \delta([Gx])$ for every $x\in \MM$. Note that $S^* = \Psi$.

Let us observe that we have $S\circ T = \Id$. Pick $x\in \MM$ and $f\in Y$. Then we have
\[\begin{split}
    f\Big((S\circ T - \Id)\big(\delta([Gx])\big)\Big) & = f\Big(S(P_G(\delta(x)))\Big) - S^* f(\delta(x))\\ & = S^* f\big(P_G(\delta(x)) - \delta(x)\big) = 0,
\end{split}\]
where in the last equality we used that $P_G(\delta(x)) - \delta(x)\in \ker P_G\subset \Psi(Y)_\perp$ and $S^* f\in \Psi(Y)$. Therefore, since $Y$ separates the points of $\F(\MM/G)$ and $\delta(\MM/G)\subset \F(\MM/G)$ is linearly dense, we have that $S\circ T-\Id = 0$, so $S\circ T = \Id$ as required.

Thus, $TS = P_G$ is the projection onto a space $\|Q\|$-isomorphic to $\F(\MM/G)$.

By Theorem~\ref{thm:amenableLinearOperator} and Theorem~\ref{thm:freeSpaceAmenableOperator} we have $\Lip_0^G(\MM)_\perp = \ker R_G\subset \ker P_G$ and since $\Psi(Y)$ is $w^*$-dense in $\Lip_0^G(\MM)$, we have $\Psi(Y)_\perp = \Lip_0^G(\MM)_\perp$ and $\ker P_G \subset \Lip_0^G(\MM)_\perp$ follows from the assumption. Hence, we obtain $\ker P_G = \Lip_0^G(\MM)_\perp$.
\end{proof}

\subsection{Projections in Lipchitz-free spaces which are dual spaces}\label{subsec:mainSubsectionDual}

Let us start with an abstract result following easily from our previous considerations.

\begin{Corollary}\label{cor:amenableProjectionDualSpace}
Let $G$ be an amenable group acting continuously on a metric space $\MM$ by isometries with bounded orbits. Let $X=\F(\MM)$ be a dual space with $X\equiv Y^*$ for some $Y\subset \Lip_0(\MM)$ and assume that we have $S(G)\cdot Y\subset Y$, that is, for every $g\in G$ and $f\in Y$ we have $S_gf\in Y$. Assume that $G$ is locally compact or SIN. Let $P_G:X\to X$ be the projection from Theorem~\ref{thm:amenableLinearOperator} (applied to the action $\A_G$) and $\Psi:\Lip_0(\MM/G)\to \Lip_0^G(\MM)$ the mapping from Lemma~\ref{lem:G-invLip}. Put $Z:=\Psi^{-1}(Y\cap \Lip_0^G(\MM))\subset \F(\MM/G)^*$.

If $Z$ separates the points of $\F(\MM/G)$, then $\F(\MM/G)$ is isometric to a $1$-complemented subspace of $\F(\MM)$.
\end{Corollary}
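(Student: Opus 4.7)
The plan is to reduce to Theorem~\ref{thm:complInBidualVariant} by producing the required norm-one projection $Q\colon X^{**}\to X^{**}$ directly from the fact that $X=Y^*$. I would invoke Lemma~\ref{lem:dualSpacesWorse} with its ``$Y$'' being our $Y$: under the identification $X^*=Y^{**}$, the dual of $T_g$ acting on $X$ is exactly the map $S_g$ on $\Lip_0(\MM)$, so the hypothesis $S(G)\cdot Y\subset Y$ is literally the invariance required by the lemma. The lemma then produces $Q:=\kappa_X\circ(\kappa_Y)^*$, a norm-one projection with $Q[X^{**}]=\kappa_X(X)$ and $\T_G(G)\cdot \ker Q\subset \ker Q$, so Theorem~\ref{thm:amenableLinearOperator} yields the projection $P_G=Q\circ R_G\colon X\to X$ with $\|P_G\|\leq 1$ and $P_G[X]=I_G$. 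The ``Moreover'' part of Lemma~\ref{lem:dualSpacesWorse} additionally records the useful identity $P_G(x)(y)=R_G(x)(y)$ for $x\in X$ and $y\in Y$.

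Next I would apply Theorem~\ref{thm:complInBidualVariant} with the set $Z$ playing the role of ``$Y$'' from that theorem. By construction $Z\subset \Lip_0(\MM/G)=\F(\MM/G)^*$, and $Z$ separates the points of $\F(\MM/G)$ by hypothesis, so the only remaining ingredient is $\ker P_G\subset \Psi(Z)_\perp$. Since $\Psi$ is a bijection, $\Psi(Z)=Y\cap\Lip_0^G(\MM)$. For $y\in Y\cap\Lip_0^G(\MM)$, the $G$-invariance of $y$ together with $y(0)=0$ yields $y(T_g\delta(m))=y(gm)-y(g0)=y(m)$ on the generators, hence $y\circ T_g=y$ on all of $X$ by linearity and continuity; therefore $R_G(x)(y)=M(g\mapsto y(T_gx))=y(x)$ for every $x\in X$. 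Combining with $P_G(x)(y)=R_G(x)(y)$, every $x\in\ker P_G$ forces $y(x)=0$ for all such $y$, i.e.\ $x\in\Psi(Z)_\perp$.

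With both hypotheses verified, Theorem~\ref{thm:complInBidualVariant} gives that $P_G[X]$ is $\|Q\|$-isomorphic to $\F(\MM/G)$. Since $\|Q\|=1$, a $1$-isomorphism is automatically an isometry, and a projection of norm $\leq 1$ onto a nonzero subspace has norm exactly $1$, so $P_G$ realises $1$-complementation of $P_G[X]\simeq \F(\MM/G)$ inside $\F(\MM)$, which is exactly the stated conclusion. The only genuine obstacle is matching the duality conventions carefully enough to see that the assumption $S(G)\cdot Y\subset Y$ is precisely what Lemma~\ref{lem:dualSpacesWorse} requires in order to conclude $\T_G(G)\cdot\ker Q\subset\ker Q$; once this bookkeeping is in place, the proof is a straightforward assembly of the previously established results.
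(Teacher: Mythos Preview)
Your proposal is correct and follows essentially the same route as the paper: invoke Lemma~\ref{lem:dualSpacesWorse} to obtain the norm-one projection $Q$ (checking that $S(G)\cdot Y\subset Y$ is precisely its invariance hypothesis), use the identity $P_G(x)|_Y=R_G(x)|_Y$ together with the $G$-invariance computation to get $\ker P_G\subset \Psi(Z)_\perp$, and then apply Theorem~\ref{thm:complInBidualVariant}. The only cosmetic difference is that the paper packages your direct computation of $R_G(x)(y)=y(x)$ for $y\in\Lip_0^G(\MM)$ as a citation of Lemma~\ref{lem:maybeRedundant}.
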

\begin{proof}
We start with the following claim.
\begin{equation}\label{claim:invariant2}\forall x\in\MM\;\forall \Phi\in\Lip_0^G(\MM)\cap Y:\qquad P_G(\delta(x))(\Phi) = \Phi(x).\end{equation}
Indeed, this follows directly from Lemma~\ref{lem:maybeRedundant} and the fact that by Lemma~\ref{lem:dualSpacesWorse} we have $P_G(m) = R_G(m)|_{Y}$ for every $m\in\F(\MM)$.

Thus, we obtain $\ker P_G\subset \Psi(Z)_\perp$. Thus, using Lemma~\ref{lem:dualSpacesWorse} we observe that assumptions of Theorem~\ref{thm:complInBidualVariant} are satisfied (applied to the set $Z$ separating the points of $\F(\MM/G)$). Thus, an application of Theorem~\ref{thm:complInBidualVariant} finishes the proof.
\end{proof}

The case when $\F(\MM)$ is a dual space has recently been thoroughly investigated in \cite{AlNew}. Let us recall few notions.

Let $\MM$ be a metric space. A Lipschitz function $f:\MM\to \Rea$ is said to be \emph{locally flat} if for every $\varepsilon > 0$ there exists $\delta > 0$ such that $|f(x)-f(y)|\leq \varepsilon d(x,y)$ whenever $d(x,y)<\delta$. By $\lip_0(\MM)$ we denote  the space of all functions in $\Lip_0(\MM)$ that
are locally flat and moreover \emph{flat at infinity}, i.e. such that $\|f|_{\MM\setminus B(0,r)}\|_{\Lip}\to 0$ as $r\to\infty$. We say $\MM$ is \emph{proper} if every closed ball is compact. Finally, we say $\MM$ is \emph{purely $1$-unrectifiable} if, for every $A\subset \Rea$ and Lipschitz map $f:A\to \MM$, the $1$-dimenensional Hausdorff measure of $f(A)$ equals $0$. By \cite{Kirch}*{Theorem 9}, $\MM$ is purely $1$-unrectifiable if and only if it does not contain a bi-Lipschitz image of a set $K\subset \Rea$ of positive Lebesgue measure.

Now, we are ready to recall the following crucial result which was proved recently in \cite{AlNew} (see Theorem 3.1 and the comment in the first paragraph of Subsection 3.1 therein).

\begin{Theorem}\label{thm:al}
Let $\MM$ be a proper space. Then the following conditions are equivalent:
\begin{itemize}
    \item $\MM$ is purely $1$-unrectifiable;
    \item $\F(\MM)$ is a dual space;
    \item $L_1\not\hookrightarrow \F(\MM)$;
    \item $\F(\MM) = \lip_0(\MM)^*$ isometrically.
\end{itemize}
Moreover, if the conditions above hold, then $\F(\MM)$ is $L$-embedded.
\end{Theorem}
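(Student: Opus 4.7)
The plan is to establish the cycle of implications (i) $\Rightarrow$ (iv) $\Rightarrow$ (ii) $\Rightarrow$ (iii) $\Rightarrow$ (i), and then address the $L$-embedded moreover clause. The implication (iv) $\Rightarrow$ (ii) is immediate. For (ii) $\Rightarrow$ (iii) I would observe that properness of $\MM$ forces $\MM$ to be $\sigma$-compact and hence separable, so $\F(\MM)$ is separable; a separable dual Banach space has the Radon--Nikodym property, which is inherited by closed subspaces, whereas $L_1$ fails the RNP, so $L_1\not\hookrightarrow \F(\MM)$.

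For (iii) $\Rightarrow$ (i) I argue by contraposition using the cited Kirchheim criterion: if $\MM$ is not purely $1$-unrectifiable, then $\MM$ contains a bi-Lipschitz copy of some compact $K\subset \Rea$ with $|K|>0$. Combining a McShane extension with a Lipschitz retraction of $\Rea$ onto a copy of $K$ yields an isomorphic linear embedding $\F(K)\hookrightarrow \F(\MM)$. On the other hand, the classical derivative-based identification $\F([0,1])\equiv L_1[0,1]$ can be adapted to show that $L_1\hookrightarrow \F(K)$ whenever $|K|>0$, so $L_1\hookrightarrow \F(\MM)$.

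The central, and hardest, step is (i) $\Rightarrow$ (iv): the isometric identification $\F(\MM)\equiv \lip_0(\MM)^*$. I would first show that the closed unit ball of $\lip_0(\MM)$ is weak-$*$-dense in the closed unit ball of $\Lip_0(\MM)$; this $1$-norming property then immediately yields the isometric predual identification via the canonical evaluation. To produce the required approximants, fix $f\in \Lip_0(\MM)$ with $\|f\|_{\Lip}\leq 1$, a finitely supported molecule $\mu = \sum a_i\delta(x_i)\in \F(\MM)$, and $\varepsilon>0$, and build $g\in\lip_0(\MM)$ with $\|g\|_{\Lip}\leq 1$ and $|\langle g,\mu\rangle-\langle f,\mu\rangle|<\varepsilon$. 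Flatness at infinity is arranged by truncation on a large closed ball, using properness. The delicate part, and the principal obstacle of the whole program, is maintaining the sharp Lipschitz bound while imposing local flatness via a scale-by-scale partition-of-unity smoothing; pure $1$-unrectifiability is precisely the geometric input that rules out the rectifiable configurations that would obstruct such smoothing, since any failure of the approximation would, via a blow-up/tangent-set argument, produce a bi-Lipschitz image of a positive-measure subset of $\Rea$ in $\MM$, contradicting Kirchheim's criterion.

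For the $L$-embedded conclusion, once (iv) is in hand I would construct an explicit projection $P\colon \F(\MM)^{**}\to \F(\MM)$ satisfying $\|P\xi\|+\|\xi-P\xi\|=\|\xi\|$ for every $\xi$. Since $\F(\MM)=\lip_0(\MM)^*$, we have $\F(\MM)^{**}\equiv\lip_0(\MM)^{***}$ and $\F(\MM)^*\equiv\Lip_0(\MM)=\lip_0(\MM)^{**}$, so the natural candidate is the projection sending $\xi$ to its restriction to $\lip_0(\MM)\subset\Lip_0(\MM)$ (interpreted as an element of $\F(\MM)$), with kernel the functionals annihilating $\lip_0(\MM)$. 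The norm-additivity then reduces to the statement that $\lip_0(\MM)$ is an $M$-ideal in $\Lip_0(\MM)$, which in turn follows from the sharp local-flatness approximation obtained in the previous step via the standard $3$-ball characterisation of $M$-ideals.
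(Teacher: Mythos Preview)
The paper does not prove this theorem. It is explicitly introduced as a result \emph{recalled} from \cite{AlNew} (Aliaga--Gartland--Petitjean--Proch\'azka), with a precise reference to Theorem~3.1 and Subsection~3.1 therein; no argument is given in the present paper beyond that citation. So there is no ``paper's own proof'' to compare your proposal against---your sketch is attempting something the paper deliberately outsources.

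That said, a brief comment on your outline. The easy implications (iv)$\Rightarrow$(ii)$\Rightarrow$(iii) are handled correctly, and your contrapositive argument for (iii)$\Rightarrow$(i) is in the right spirit. The implication (i)$\Rightarrow$(iv) is indeed the heart of \cite{AlNew}, and your description of it as a weak-$*$ density/approximation statement for $\lip_0(\MM)$ in the unit ball of $\Lip_0(\MM)$ is the correct reformulation; however, what you write for this step is a statement of intent rather than a proof. The phrase ``any failure of the approximation would, via a blow-up/tangent-set argument, produce a bi-Lipschitz image of a positive-measure subset of $\Rea$'' hides essentially the entire content of the Aliaga--Gartland--Petitjean--Proch\'azka paper, which requires a substantial detour through curve fragments, Bate's characterisation of pure $1$-unrectifiability, and a careful construction of locally flat approximants. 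Likewise, the $L$-embedding via the $M$-ideal structure of $\lip_0(\MM)$ in $\Lip_0(\MM)$ is the right mechanism, but establishing that $M$-ideal property is again nontrivial and relies on the same approximation machinery. In short: your roadmap is broadly faithful to how \cite{AlNew} proceeds, but the two hard steps remain black boxes in your write-up, which is consistent with the fact that the present paper simply cites the result.
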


Combined with our results we obtain the following.

\begin{Theorem}\label{thm:dual}
Let $G$ be an amenable group acting continuously on a proper purely $1$-unrectifiable metric space $\MM$ by isometries with bounded orbits. Assume that $G$ is locally compact or SIN. Then the following conditions are equivalent.
\begin{enumerate}
    \item\label{it:dual2} $\MM/G$ is purely $1$-unrectifiable.
    \item\label{it:compl} $\F(\MM/G)$ is isometric to a $1$-complemented subspace of $\F(\MM)$.
    \item\label{it:embed} $\F(\MM/G)\hookrightarrow \F(\MM)$.
\end{enumerate}
\end{Theorem}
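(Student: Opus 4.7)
The strategy is to show the cyclic implications. Clearly \eqref{it:compl}$\Rightarrow$\eqref{it:embed}. For \eqref{it:embed}$\Rightarrow$\eqref{it:dual2}, since $\MM$ is proper and purely $1$-unrectifiable, Theorem~\ref{thm:al} gives $L_1\not\hookrightarrow \F(\MM)$; hence \eqref{it:embed} forces $L_1\not\hookrightarrow \F(\MM/G)$. Provided $\MM/G$ is itself proper, Theorem~\ref{thm:al} applied to $\MM/G$ then yields \eqref{it:dual2}.

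I therefore first verify that $\MM/G$ is proper. Setting $C:=\sup_{v\in\overline{G0}}d(v,0)<\infty$, one checks that any orbit class $[Gx]$ with $d_{\MM/G}([Gx],[G0])\le r$ admits a representative $y\in [Gx]$ with $d(y,0)\le r+C$. Composing with the $1$-Lipschitz quotient map $A\colon \MM\to\MM/G$, $x\mapsto [Gx]$, and using properness of $\MM$, the closed ball of radius $r$ around $[G0]$ is contained in the compact set $A(\{x\in\MM\colon d(x,0)\le r+C\})$ and is therefore itself compact.

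The substantive step is \eqref{it:dual2}$\Rightarrow$\eqref{it:compl}, which I plan to obtain by applying Corollary~\ref{cor:amenableProjectionDualSpace} with $Y:=\lip_0(\MM)$. Theorem~\ref{thm:al} applied to $\MM$ provides the identification $\F(\MM)\equiv\lip_0(\MM)^*$, and the required invariance $S(G)\cdot\lip_0(\MM)\subseteq\lip_0(\MM)$ is routine: local flatness is preserved because $g^{-1}$ is an isometry, and flatness at infinity of $S_gf$ follows from that of $f$ using the bounded-orbits estimate $d(g^{-1}0,0)\le C$.

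The main obstacle---and the point that crucially uses \eqref{it:dual2}---is verifying that $Z:=\Psi^{-1}(\lip_0(\MM)\cap\Lip_0^G(\MM))$ separates points of $\F(\MM/G)$. The plan here is to establish the inclusion $\lip_0(\MM/G)\subseteq Z$, i.e., that $\Psi$ maps $\lip_0(\MM/G)$ into $\lip_0(\MM)$. Local flatness transfers trivially from the $1$-Lipschitzness of $A$, while flatness at infinity rests on the lower bound $d_{\MM/G}([G0],[Gx])\ge d(x,0)-C$, which guarantees that points far from $0$ in $\MM$ map to orbit classes far from $[G0]$ in $\MM/G$. Once $\lip_0(\MM/G)\subseteq Z$ is in place, assumption \eqref{it:dual2} together with Theorem~\ref{thm:al} for $\MM/G$ gives $\F(\MM/G)\equiv\lip_0(\MM/G)^*$, so the predual $\lip_0(\MM/G)$---and hence $Z$---separates points of $\F(\MM/G)$. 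Corollary~\ref{cor:amenableProjectionDualSpace} then delivers \eqref{it:compl}, closing the cycle.
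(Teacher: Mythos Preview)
Your proposal is correct and follows essentially the same route as the paper: verify properness of $\MM/G$, use Theorem~\ref{thm:al} for both directions \eqref{it:embed}$\Rightarrow$\eqref{it:dual2} and \eqref{it:dual2}$\Rightarrow$\eqref{it:compl}, and for the latter apply Corollary~\ref{cor:amenableProjectionDualSpace} with $Y=\lip_0(\MM)$ after checking $S(G)\cdot\lip_0(\MM)\subseteq\lip_0(\MM)$ and $\lip_0(\MM/G)\subseteq Z$. The paper additionally records the equality $\Psi^{-1}(\lip_0(\MM)\cap\Lip_0^G(\MM))=\lip_0(\MM/G)$ and invokes Lemma~\ref{lem:lembed} alongside Corollary~\ref{cor:amenableProjectionDualSpace}, but your inclusion suffices for the separation argument and Corollary~\ref{cor:amenableProjectionDualSpace} is already self-contained via Lemma~\ref{lem:dualSpacesWorse}, so these are cosmetic differences rather than a genuinely different strategy.
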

\begin{proof}Note that since the action of $G$ has bounded orbits, every closed ball in $\MM/G$ is contained in a Lipschitz image of a closed ball in $\MM$, so since $\MM$ is proper, $\MM/G$ is proper as well. Further, by Theorem~\ref{thm:al}, $\lip_0(\MM)$ separates the points of $\F(\MM)$.

If $\MM/G$ is purely $1$-unrectifiable, applying Theorem~\ref{thm:al} we observe that the space $\lip_0(\MM/G)$ separates the points of $\F(\MM/G)$ and that $\F(\MM/G)$ is $L$-embedded. Moreover, it is easy to observe that if $\Psi:\Lip_0(\MM/G)\to \Lip_0^G(\MM)$ is the mapping from Lemma~\ref{lem:G-invLip}, then \[\Psi^{-1}(\lip_0(\MM)\cap \Lip_0^G(\MM)) = \lip_0(\MM/G).\]
Thus, by Corollary~\ref{cor:amenableProjectionDualSpace} and Lemma~\ref{lem:lembed} we obtain that \eqref{it:compl} holds.

Trivially, \eqref{it:compl} implies \eqref{it:embed}. Finally, using Theorem~\ref{thm:al} and the fact that both $\MM$ and $\MM/G$ are proper, it is easy to see that \eqref{it:embed} implies \eqref{it:dual2}.
\end{proof}

We do not know whether the equivalent conditions in Theorem~\ref{thm:dual} are actually always satisfied, see Question~\ref{q:unrect}. On the other hand, note that Lipschitz image of a compact purely $1$-unrectifiable metric space need not be purely $1$-unrectifiable as witnessed e.g. by the mapping $\Id:([0,1],|\cdot|^{1/2})\to ([0,1],|\cdot|)$. Thus, the following seems to be an interesting consequence.

\begin{Corollary}\label{cor:unrectGroup}
Let $G$ be a compact group acting continuously on a proper purely $1$-unrectifiable metric space $\MM$ by isometries. Then $\MM/G$ is purely $1$-unrectifiable
\end{Corollary}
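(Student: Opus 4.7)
The plan is to deduce this corollary directly from the equivalence established in Theorem~\ref{thm:dual}, using Corollary~\ref{cor:mainCpctGroup} to supply condition \eqref{it:compl} of that theorem with no rectifiability hypothesis on the quotient.

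First I would verify the standing hypotheses of Theorem~\ref{thm:dual}. Since $G$ is compact, it is amenable via its normalized Haar measure, it is locally compact, and every orbit $Gx$ is a continuous image of the compact group $G$, hence compact and in particular bounded. Combined with the given properness and pure $1$-unrectifiability of $\MM$, all assumptions of Theorem~\ref{thm:dual} are in place.

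Next I would invoke Corollary~\ref{cor:mainCpctGroup}, which for any compact group acting by isometries on a metric space produces an isometric $1$-complemented copy of $\F(\MM/G)$ inside $\F(\MM)$, with no rectifiability assumption on $\MM/G$ whatsoever. This is precisely condition \eqref{it:compl} in Theorem~\ref{thm:dual}, so the implication \eqref{it:compl}$\Rightarrow$\eqref{it:dual2} from that theorem yields at once that $\MM/G$ is purely $1$-unrectifiable.

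The main conceptual point, and the source of whatever delicacy the corollary possesses, is the asymmetry hidden in these ingredients. In the general amenable setting one needs pure $1$-unrectifiability of $\MM/G$ as an \emph{input} in order to build a projection onto $\F(\MM/G)$, since it is what supplies the $L$-embeddedness of $\F(\MM/G)$ exploited in Theorem~\ref{thm:complInBidualVariant} through Theorem~\ref{thm:al}. In the compact case, by contrast, the averaging over the Haar measure in Lemma~\ref{lem:amenableOperator}\eqref{it:cpctGroup} constructs the projection unconditionally. It is precisely this discrepancy that lets pure $1$-unrectifiability be propagated from $\MM$ to $\MM/G$ here, even though a bi-Lipschitz image of a compact purely $1$-unrectifiable space can in general fail to be purely $1$-unrectifiable. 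So there is no real obstacle in carrying out the plan; the substantial content has already been absorbed into Corollary~\ref{cor:mainCpctGroup} and the structural Theorem~\ref{thm:al} underlying Theorem~\ref{thm:dual}, and the work here is only to assemble the pieces.
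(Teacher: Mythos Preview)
Your proposal is correct and follows exactly the same route as the paper: the paper's proof reads in its entirety ``Follows directly from Theorem~\ref{thm:dual} and Corollary~\ref{cor:mainCpctGroup},'' and you have simply unpacked this, verifying the hypotheses of Theorem~\ref{thm:dual} for compact $G$ and using Corollary~\ref{cor:mainCpctGroup} to obtain condition~\eqref{it:compl}, then invoking the equivalence with~\eqref{it:dual2}. Your additional paragraph explaining why compactness bypasses the need for rectifiability of the quotient is accurate commentary but not part of the argument proper.
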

\begin{proof}
Follows directly from Theorem~\ref{thm:dual} and Corollary~\ref{cor:mainCpctGroup}.
\end{proof}

\section{Problems and applications}

One of the main advantages of the methods presented in this paper is that it allows to conclude that free spaces over the spaces $\MM/G$, in many cases, enjoy all the properties that the spaces $\F(\MM)$ do provided these are inherited to complemented subspaces.

In \cite{DP20}, the authors used this observation in order to prove that $\F(G)$ has MAP whenever $G$ is a compact group. More precisely, the projection considered in this paper enabled the authors to reduce the situation to the case when $G$ is a Lie group. Let us comment on this kind of reduction, which could have some further applications.

 Fix such a Banach space property $\PP$ that is inherited to complemented subspaces (resp. $1$-complemented subspaces) e.g. the bounded approximation property (resp. the metric approximation property).

To illustrate this idea more properly on examples, consider homogeneous metric spaces. We recall that a metric space $\MM$ is \emph{homogeneous} if each point of $\MM$ is `indistinguishable' from each other, i.e. there is no metric property that can tell two points from each other. Formally, this can be defined by saying that the isometry group of $\MM$ acts transitively on $\MM$, that is, for each pair $x,y\in\MM$ there is an isometry $\phi$ of $\MM$ satisfying $\phi(x)=y$.

Suppose now that $\MM$ is moreover proper, i.e. all the balls are compact. Then for its isometry group with the pointwise convergence topology, denoted by $G$, it is well known and not difficult to verify that
\begin{itemize}
\item $G$ is locally compact;
\item $G$ acts on $\MM$ transitively;
\item the stabilizer of any point $x\in\MM$ is a compact subgroup $K$ and $G/K$ is homeomorphic with $\MM$.
\end{itemize}
We briefly sketch the proof for the convenience of the reader. By definition of homogeneity, $G$ acts transitively. Fix a countable dense subset $(x_n)_{n\in\Nat}\subseteq\MM$, fix $g\in G$ and let us show that it has a compact neighborhood. Set $U=\{h\in G\colon d(h x_1, g x_1)\leq 1\}$, we claim it is compact. It suffices to check that every sequence $(g_n)_{n\in\Nat}\subseteq U$ has a convergent subsequence. Notice that for every $m\in\Nat$ the set $\bigcup_{i\in\Nat} \{g_i x_1,\ldots,g_i x_m\}$ is bounded (each $g_ix_m$ is contained in the ball $B(gx_1,1+d(x_m,x_1))$), therefore precompact. Thus, using a diagonal argument,  we can find a subsequence $(g_{i_n})_n$ such that for every $m\in\Nat$, $(g_{i_n} x_m)_n$ is convergent. It follows that the sequence $(g_{i_n})_n$ is convergent. This finishes the proof of the claim. This also implies that the stabilizer of any point is compact (a closed subset of a compact neighborhood). For the last assertion see \cite{gao}*{Theorem 3.2.4}.

If in addition there exists a left-invariant metric on $G$ such that the metric on $\MM=G/K$ is the quotient metric, then it follows from Corollary~\ref{cor:mainCpctGroup} (applied to the action of $K$ on the metric space $G$) that $\F(\MM)$ has $\PP$ if $\F(G)$ does.

\begin{Example}
Let $\MM$ be a symmetric Riemannian manifold equipped with a Riemannian distance. Let $G$ be the identity component of its isometry group equipped with a left-invariant Riemannian distance (since it is a Lie group). Then $\F(\MM)$ has $\PP$ if $\F(G)$ does. Indeed, see \cite{Mor}*{\S 1.2 \# 7} for the fact that $\MM$ is isometric to $G/K$, where $G$ is as above and $K\subset G$ a compact subgroup.
\end{Example}
This suggests it is important to understand the structure of free spaces over (connected) Lie groups, with their canonical left-invariant Riemannian distance, but also with other compatible left-invariant metrics. Indeed, by the celebrated Gleason-Yamabe's solution to Hilbert's fifth problem, for every connected locally compact group $G$ and an arbitrarily small neighborhood $U$ of the identity in $G$ there exists a compact subgroup $K\subseteq U$ such that $G/K$ is a connected Lie group, see \cite{Tao}*{Theorem 1.1.17}, which can be equipped with the quotient metric of the left-invariant metric on $G$. Therefore, if $\MM$ is a homogeneous connected proper metric space whose metric can be lifted to the left-invariant metric of its isometry group, then a lot of information about $\F(\MM)$ can be derived just from the information about the spaces $\F(G)$, where $G$ is a (connected) Lie group.

\begin{Suggestion}
Study Banach space properties that are inherited to (one-)complemented subspaces on free spaces over connected Lie groups with left-invariant metrics.
\end{Suggestion}

Quite many natural questions arise when wondering under which conditions we have $\F(\MM/G)\complemented \F(\MM)$. Let us conclude this paper by mentioning few of those.

From the point of view of Banach space theory, the following seems to be an interesting problem.
\begin{Question}Let $X$ be a Banach space and $Y\subset X$ its closed subspace. Is it true that $\Lip_0(X/Y)\complemented \Lip_0(X)$?
\end{Question}
Note that, since any separable Banach space is isomorphic to a quotient of $\ell_1$ a positive answer would imply that for any separable Banach space $X$ we have $\Lip_0(X)\complemented \Lip_0(\ell_1)$ and in particular, by the Pe\l cyz\'nski decomposition method and \cite{K15}*{Theorem 3.1}, we would obtain  $\Lip_0(X)\simeq \Lip_0(\ell_1)$ whenever $\ell_1\complemented X$. This seems to be quite a strong consequence. In particular, since it is known that $\ell_1\complemented U\simeq \F(U)$ for the Pe\l czy\'nski's universal basis space $U$, see \cite{GodefroyKalton2003}*{Remark on p.139}, we would have $U^*\simeq \Lip_0(\ell_1)$. Moreover, since by \cite{J72}*{Theorem 3} $U^*$ does not have AP, $\Lip_0(\ell_1)$ would not have AP. 

Another interesting question from the point of view of Banach space theory is the following. Recall that a net in a Banach space is a subset which is $a$-dense and $b$-separated for some $a,b >0$ and note that for any Banach space $X$ we may find a net $\mathcal{N}_X\subset X$ which is also additive subgroup, see \cite{DOSZ08}*{Theorem 5.5}, so the question seems to be naturally connected to the research handled in this paper. 
\begin{Question}\label{q:lipNetQuotient}Let $X$ be an infinite-dimensional Banach space. Does there exist a net $\mathcal{N}_X\subset X$ which is also additive subgroup such that  $\Lip_0(X/\mathcal{N}_X)\complemented \Lip_0(X)$?
\end{Question}
Note that it is even open whether for infinite-dimensional spaces we have $\Lip_0(\mathcal{N}_X)\complemented \Lip_0(X)$, see \cite{CCD19}*{Question 3 and a comment below}, so the above more-or-less heads towards a natural question whether we have $\Lip_0(X)\simeq \Lip_0(\mathcal{N}_X)\oplus \Lip_0(X/\mathcal{N}_X)$ isomorphically.

Let us also note that if an additive subgroup $\mathcal{N}_X\subset X$ is $K$-dense in $X$ and we equip $X$ with the metric $d(x,y):=\min\{\|x-y\|,2K\}$, then $X/\mathcal{N}_X$ is isometric to $(X,d)/\mathcal{N}_X$, so using Theorem~\ref{thm:freeSpaceAmenableOperator} we obtain $\Lip_0((X,d)/\mathcal{N}_X)\complemented \Lip_0(X,d)$. Thus, a possible way of providing a positive answer to Question~\ref{q:lipNetQuotient} would be to show that $\Lip_0(X,d)\complemented \Lip_0(X,\|\cdot\|)$.

In our proofs we were using the assumption that $G$ has bounded orbits. In this case one could ask e.g. the following which aims at pushing forward what was initiated in Theorem~\ref{thm:freeSpaceAmenableOperator}.
\begin{Question}
Let $G$ be a metric group which is amenable and locally compact or SIN. Let $H\subset G$ be its bounded subgroup. Is it true that $\F(G/H)\complemented \F(G)$?
\end{Question}

Note that using Theorem~\ref{thm:freeSpaceAmenableOperator} it would be sufficient to prove that $I_G$ separates the points of $\Lip_0^G(\MM)$ and moreover $I_G\simeq R_G[X]$ isomorphically (where $\Lip_0^G(\MM)$, $I_G$ and $R_G$ are as in Theorem~\ref{thm:freeSpaceAmenableOperator}).

Finally, let us note that, as mentioned above, we do not know an answer to the following.
\begin{Question}\label{q:unrect}
Let $G$ be an amenable group acting on a proper purely $1$-unrectifiable metric space $\MM$ by isometries with bounded orbits. Assume that $G$ is locally compact or SIN. Is then $\MM/G$ purely $1$-unrectifiable?
\end{Question}
By Corollary~\ref{cor:unrectGroup}, the answer is positive for compact groups.

\begin{bibdiv}
\begin{biblist}

\bib{AlNew}{article}{
      author={Aliaga, Ramón~J.},
      author={Gartland, Chris},
      author={Petitjean, Colin},
      author={Procházka, Antonín},
       title={Purely 1-unrectifiable spaces and locally flat lipschitz
  functions},
        date={2021},
     journal={arXiv:2103.09370 [math.MG]},
}

\bib{APP21}{article}{
      author={Aliaga, Ram\'{o}n~J.},
      author={Petitjean, Colin},
      author={Proch\'{a}zka, Anton\'{\i}n},
       title={Embeddings of {L}ipschitz-free spaces into {$\ell_1$}},
        date={2021},
        ISSN={0022-1236},
     journal={J. Funct. Anal.},
      volume={280},
      number={6},
       pages={108916, 26},
         url={https://doi.org/10.1016/j.jfa.2020.108916},
      review={\MR{4193768}},
}

\bib{AmenableBook}{book}{
      author={Bekka, Bachir},
      author={de~la Harpe, Pierre},
      author={Valette, Alain},
       title={Kazhdan's property ({T})},
      series={New Mathematical Monographs},
   publisher={Cambridge University Press, Cambridge},
        date={2008},
      volume={11},
        ISBN={978-0-521-88720-5},
         url={https://doi.org/10.1017/CBO9780511542749},
      review={\MR{2415834}},
}

\bib{CCD19}{article}{
      author={Candido, Leandro},
      author={C\'{u}th, Marek},
      author={Doucha, Michal},
       title={Isomorphisms between spaces of {L}ipschitz functions},
        date={2019},
        ISSN={0022-1236},
     journal={J. Funct. Anal.},
      volume={277},
      number={8},
       pages={2697\ndash 2727},
         url={https://doi.org/10.1016/j.jfa.2019.02.003},
      review={\MR{3990732}},
}

\bib{CDW2016}{article}{
      author={C\'{u}th, Marek},
      author={Doucha, Michal},
      author={Wojtaszczyk, Przemys\l~aw},
       title={On the structure of {L}ipschitz-free spaces},
        date={2016},
        ISSN={0002-9939},
     journal={Proc. Amer. Math. Soc.},
      volume={144},
      number={9},
       pages={3833\ndash 3846},
         url={https://doi.org/10.1090/proc/13019},
      review={\MR{3513542}},
}

\bib{DOSZ08}{article}{
      author={Dilworth, S.~J.},
      author={Odell, E.},
      author={Schlumprecht, T.},
      author={Zs\'{a}k, A.},
       title={Coefficient quantization in {B}anach spaces},
        date={2008},
        ISSN={1615-3375},
     journal={Found. Comput. Math.},
      volume={8},
      number={6},
       pages={703\ndash 736},
         url={https://doi.org/10.1007/s10208-007-9002-0},
      review={\MR{2461244}},
}

\bib{DP20}{article}{
      author={Doucha, Michal},
      author={Kaufmann, Pedro~Levit},
       title={Approximation properties in {L}ipschitz-free spaces over groups},
        date={2020},
     journal={arXiv:2005.09785 [math.FA]},
}

\bib{FHHMZ}{book}{
      author={Fabian, Mari\'{a}n},
      author={Habala, Petr},
      author={H\'{a}jek, Petr},
      author={Montesinos, Vicente},
      author={Zizler, V\'{a}clav},
       title={Banach space theory},
      series={CMS Books in Mathematics/Ouvrages de Math\'{e}matiques de la
  SMC},
   publisher={Springer, New York},
        date={2011},
        ISBN={978-1-4419-7514-0},
         url={https://doi.org/10.1007/978-1-4419-7515-7},
        note={The basis for linear and nonlinear analysis},
      review={\MR{2766381}},
}

\bib{GGKMS}{article}{
      author={Galaz-Garc\'{\i}a, Fernando},
      author={Kell, Martin},
      author={Mondino, Andrea},
      author={Sosa, Gerardo},
       title={On quotients of spaces with {R}icci curvature bounded below},
        date={2018},
        ISSN={0022-1236},
     journal={J. Funct. Anal.},
      volume={275},
      number={6},
       pages={1368\ndash 1446},
         url={https://doi.org/10.1016/j.jfa.2018.06.002},
      review={\MR{3820328}},
}

\bib{gao}{book}{
      author={Gao, Su},
       title={Invariant descriptive set theory},
      series={Pure and Applied Mathematics (Boca Raton)},
   publisher={CRC Press, Boca Raton, FL},
        date={2009},
      volume={293},
        ISBN={978-1-58488-793-5},
      review={\MR{2455198}},
}

\bib{GPZ18}{article}{
      author={Garc\'{\i}a-Lirola, Luis},
      author={Proch\'{a}zka, Anton\'{\i}n},
      author={Rueda~Zoca, Abraham},
       title={A characterisation of the {D}augavet property in spaces of
  {L}ipschitz functions},
        date={2018},
        ISSN={0022-247X},
     journal={J. Math. Anal. Appl.},
      volume={464},
      number={1},
       pages={473\ndash 492},
         url={https://doi.org/10.1016/j.jmaa.2018.04.017},
      review={\MR{3794100}},
}

\bib{GodefroyKalton2003}{article}{
      author={Godefroy, G.},
      author={Kalton, N.~J.},
       title={Lipschitz-free {B}anach spaces},
        date={2003},
        ISSN={0039-3223},
     journal={Studia Math.},
      volume={159},
      number={1},
       pages={121\ndash 141},
         url={https://doi.org/10.4064/sm159-1-6},
        note={Dedicated to Professor Aleksander Pe\l czy\'{n}ski on the
  occasion of his 70th birthday},
      review={\MR{2030906}},
}

\bib{J72}{article}{
      author={Johnson, William~B.},
       title={A complementary universal conjugate {B}anach space and its
  relation to the approximation problem},
        date={1972},
        ISSN={0021-2172},
     journal={Israel J. Math.},
      volume={13},
       pages={301\ndash 310 (1973)},
         url={https://doi.org/10.1007/BF02762804},
      review={\MR{326356}},
}

\bib{K15}{article}{
      author={Kaufmann, Pedro~Levit},
       title={Products of {L}ipschitz-free spaces and applications},
        date={2015},
        ISSN={0039-3223},
     journal={Studia Math.},
      volume={226},
      number={3},
       pages={213\ndash 227},
         url={https://doi.org/10.4064/sm226-3-2},
      review={\MR{3356002}},
}

\bib{Kirch}{article}{
      author={Kirchheim, Bernd},
       title={Rectifiable metric spaces: local structure and regularity of the
  {H}ausdorff measure},
        date={1994},
        ISSN={0002-9939},
     journal={Proc. Amer. Math. Soc.},
      volume={121},
      number={1},
       pages={113\ndash 123},
         url={https://doi.org/10.2307/2160371},
      review={\MR{1189747}},
}

\bib{LoVi}{article}{
      author={Lott, John},
      author={Villani, C\'{e}dric},
       title={Ricci curvature for metric-measure spaces via optimal transport},
        date={2009},
        ISSN={0003-486X},
     journal={Ann. of Math. (2)},
      volume={169},
      number={3},
       pages={903\ndash 991},
         url={https://doi.org/10.4007/annals.2009.169.903},
      review={\MR{2480619}},
}

\bib{Mor}{book}{
      author={Morris, Dave~Witte},
       title={Introduction to arithmetic groups},
   publisher={Deductive Press, [place of publication not identified]},
        date={2015},
        ISBN={978-0-9865716-0-2; 978-0-9865716-1-9},
      review={\MR{3307755}},
}

\bib{PZ18}{article}{
      author={Proch\'{a}zka, Anton\'{\i}n},
      author={Rueda~Zoca, Abraham},
       title={A characterisation of octahedrality in {L}ipschitz-free spaces},
        date={2018},
        ISSN={0373-0956},
     journal={Ann. Inst. Fourier (Grenoble)},
      volume={68},
      number={2},
       pages={569\ndash 588},
         url={http://aif.cedram.org/item?id=AIF_2018__68_2_569_0},
      review={\MR{3803112}},
}

\bib{SINgroupBook}{book}{
      author={Roelcke, Walter},
      author={Dierolf, Susanne},
       title={Uniform structures on topological groups and their quotients},
   publisher={McGraw-Hill International Book Co., New York},
        date={1981},
        ISBN={0-07-0543412-8},
        note={Advanced Book Program},
      review={\MR{644485}},
}

\bib{Tao}{book}{
      author={Tao, Terence},
       title={Hilbert's fifth problem and related topics},
      series={Graduate Studies in Mathematics},
   publisher={American Mathematical Society, Providence, RI},
        date={2014},
      volume={153},
        ISBN={978-1-4704-1564-8},
         url={https://doi.org/10.1088/0253-6102/41/3/335},
      review={\MR{3237440}},
}

\bib{Weaver1999}{book}{
      author={Weaver, Nik},
       title={Lipschitz algebras},
   publisher={World Scientific Publishing Co., Inc., River Edge, NJ},
        date={1999},
        ISBN={981-02-3873-8},
         url={https://doi.org/10.1142/4100},
      review={\MR{1832645}},
}

\end{biblist}
\end{bibdiv}

\end{document}